\definecolor{grey}{rgb}{.7,.7,.7}
\numberwithin{equation}{section}
\newcommand{\aplim}{\mathop{\text{\rm ap-lim}}}
\newcommand{\e}{\varepsilon}
\newcommand{\N}{\mathbb{N}}
\newcommand{\de}{\partial}
\renewcommand{\phi}{\varphi}
\DeclareMathOperator{\spt}{spt}
\newcommand{\Om}{\Omega}
\let\div\relax \DeclareMathOperator{\div}{div}
\renewcommand{\epsilon}{\varepsilon}
\newcommand{\DM}{\mathcal{DM}^\infty}
\newcommand{\R}{\mathbb{R}}
\newcommand{\Rnp}{{\R^n_+}}
\newcommand{\Rno}{{\R^n_0}}
\newcommand{\Rnm}{{\R^n_-}}
\newcommand{\Leb}{{\mathcal L}}
\def\H{\mathcal{H}}
\theoremstyle{plain}
\newtheorem{thm}{Theorem}[section]
\newtheorem{lem}[thm]{Lemma}
\newtheorem{prop}[thm]{Proposition}
\newtheorem{cor}[thm]{Corollary}
\newtheorem*{thm*}{Theorem}
\theoremstyle{definition}
\newtheorem{defin}[thm]{Definition}
\theoremstyle{remark}
\newtheorem{rem}[thm]{Remark}
\newtheorem{ex}[thm]{Example}
\newtheorem*{ex*}{Example}
\title[Rigidity and trace properties of divergence-measure vector fields]{
Rigidity and trace properties \\ of divergence-measure vector fields}
\author{Gian Paolo Leonardi}
\address{Dipartimento di Matematica, Universit\`a di Trento, via Sommarive 14, I-38123 Povo (TN), ITALY}
\email{gianpaolo.leonardi@unitn.it}
\author{Giorgio Saracco}
\address{Dipartimento di Matematica, Universit\`a di Pavia, via Ferrata 5, I-27100 Pavia (PV), ITALY}
\email{giorgio.saracco@unipv.it}
\thanks{G.P. Leonardi and G. Saracco have been partially supported by the INdAM-GNAMPA 2019 project ``Problemi isoperimetrici in spazi Euclidei e non'' (n.~prot.~U-UFMBAZ-2019-000473 11-03-2019).}
\subjclass[2010]{Primary: 26B20, 28A75. Secondary: 35L65}
\keywords{divergence-measure vector field; weak normal trace; rigidity}
\begin{document}

\definecolor{eqeqeq}{rgb}{0.87,0.87,0.87}
\definecolor{ffffff}{rgb}{1.,1.,1.}
\definecolor{black}{rgb}{0.,0.,0.}
\definecolor{zzffzz}{rgb}{0.6,1.,0.6}
\definecolor{ttqqqq}{rgb}{0.2,0.,0.}
\definecolor{qqqqtt}{rgb}{0.,0.,0.2}
\definecolor{uuuuuu}{rgb}{0.26,0.26,0.26}

\begin{abstract}
We consider a $\phi$-rigidity property for divergence-free vector fields in the Euclidean $n$-space, where $\phi(t)$ is a non-negative convex function vanishing only at $t=0$. We show that this property is always satisfied in dimension $n=2$, while in higher dimension it requires some further restriction on $\phi$. 
In particular, we exhibit counterexamples to \textit{quadratic rigidity} (i.e.~when $\phi(t) = ct^2$) in dimension $n\ge 4$. The validity of the quadratic rigidity, which we prove in dimension $n=2$, implies the existence of the trace of a divergence-measure vector field $\xi$ on an $\H^{1}$-rectifiable set $S$, as soon as its weak normal trace $[\xi\cdot \nu_S]$ is maximal on $S$. As an application, we deduce that the graph of an extremal solution to the prescribed mean curvature equation in a weakly-regular domain becomes vertical near the boundary in a pointwise sense.
\end{abstract}

 \hspace{-3cm}
 {
 \begin{minipage}[t]{0.6\linewidth}
 \begin{scriptsize}
 \vspace{-2cm}
 This is a pre-print of an article published in \emph{Adv. Calc. Var.}. The final authenticated version is available online at: \href{https://doi.org/10.1515/acv-2019-0094}{https://doi.org/10.1515/acv-2019-0094}
 \end{scriptsize}
\end{minipage} 
}

\maketitle

\section{Introduction}
The structure and the properties of vector fields, whose distributional divergence either vanishes or is represented by a locally finite measure, are of great interest in Mathematics and in Physics. Such vector fields arise, for instance, in fluid mechanics, in electromagnetism, and in conservation laws. We shall not give a detailed account, however the interested reader is referred to the monographs~\cite{feireisl2009singular, Galdi2011book}, as well as to the papers~\cite{ChaeWolf2016, DeLellisIgnat15, Ign12a,Ignat2012, KenigKoch2011}, and to the references found therein.
\par
In this paper we shall consider a rigidity property \`a la Liouville for divergence-free vector fields in $\R^n$ defined hereafter. 
\begin{defin}\label{defin:rigidity}
	Let $\phi:[0,+\infty)\to [0,+\infty)$ be a convex function such that $\phi(t)=0$ if and only if $t=0$. We say that the \textit{$\phi$-rigidity} property holds in $\R^n$ if, for any vector field $\eta = (\eta_1,\dots,\eta_n)\in L^\infty(\R^n;\R^n)$ such that
	\begin{itemize}
		\item[(i)] $\eta = 0$ on $\{x\in \R^n:\ x_n<0\}$;
		\item[(ii)] $\div \eta = 0$ on $\R^n$ in distributional sense;
		\item[(iii)] $\eta_n \ge \phi(|\eta|)$ almost everywhere on $\R^n$;
	\end{itemize}
	one has that $\eta \equiv 0$ almost everywhere on $\R^n$.
\end{defin}
We state here two results that establish this rigidity property.


\begin{thm}[linear rigidity]\label{thm:rigidity1}
	Let $n\ge 2$. Then, the $\phi$-rigidity property holds in $\R^n$ when $\phi(t) = ct$ for some constant $c>0$.
\end{thm}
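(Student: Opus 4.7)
The plan is to reduce the problem to a one-dimensional Gronwall-type inequality along the $x_n$-axis by integrating out the transverse variables against a carefully chosen positive exponential weight. The weight is what lets us absorb an $L^\infty$ (but not necessarily integrable) vector field without worrying about slab-by-slab integrability.

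First, observe that (i) and (iii) together give $\eta_n\ge 0$ everywhere and $|\eta'|\le c^{-1}\eta_n$ a.e., where $\eta'=(\eta_1,\dots,\eta_{n-1})$; one may assume $c\le 1$, since otherwise $\eta_n\ge c|\eta|\ge c\eta_n$ already forces the conclusion. Choose a smooth, strictly positive weight $\rho:\R^{n-1}\to(0,\infty)$ with $\rho\in L^1$ and $|\nabla\rho|\le\rho$ pointwise—for instance $\rho(x')=e^{-\sqrt{1+|x'|^2}}$—and set
$$
f(t)=\int_{\R^{n-1}}\eta_n(x',t)\,\rho(x')\,dx'.
$$
Then $f$ is bounded and nonnegative, and by (i) it vanishes on $(-\infty,0)$.

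Next I would derive a differential inequality for $f$. Testing $\div\eta=0$ against the product test function $\rho(x')\psi(t)$ with $\psi\in C^\infty_c(\R)$—with a preliminary approximation of $\rho$ by compactly supported cutoffs, then passing to the limit using $\eta\in L^\infty$ together with $\rho,|\nabla\rho|\in L^1$—produces the distributional identity
$$
f'(t)=\int_{\R^{n-1}}\eta'(x',t)\cdot\nabla\rho(x')\,dx'.
$$
The right-hand side is bounded in $t$, so $f$ is in fact Lipschitz. The design of $\rho$ now pays off: combining $|\eta'|\le c^{-1}\eta_n$ with $|\nabla\rho|\le\rho$ yields the pointwise a.e.\ estimate
$$
|f'(t)|\le c^{-1}\int_{\R^{n-1}}\eta_n(x',t)\,\rho(x')\,dx'=c^{-1}f(t).
$$

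Finally, since $f'-c^{-1}f\le 0$, the function $e^{-t/c}f(t)$ is nonincreasing on $\R$. Continuity of $f$ and vanishing on $(-\infty,0)$ give $f(0)=0$, and together with $f\ge 0$ this forces $f\equiv 0$. Because $\rho>0$ and $\eta_n\ge 0$, we conclude $\eta_n=0$ a.e., after which $|\eta|\le c^{-1}\eta_n=0$ closes the argument. The main technical point I expect to be the real obstacle is justifying the distributional divergence identity against the non-compactly supported weight $\rho$—this is exactly where the $L^\infty$ assumption on $\eta$ is used, paired with the summability of $\rho$ and $|\nabla\rho|$; everything after that is essentially a one-dimensional Gronwall argument.
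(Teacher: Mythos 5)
Your proof is correct, but it takes a genuinely different route from the paper's. The paper first mollifies $\eta$ (Lemma~\ref{lem:mollifica}), perturbs it to $X=\eta+\e e_n$ so that $X_n\ge\e$, and builds the flow tube $\Psi(A\times(0,h_0))$ of $X$ running from $A\times\{h_0\}$ down to $\{x_n=0\}$; the linear bound (iii) enters there by controlling the slope of the flow lines, $|X|/X_n\le\max\{1,1/c\}$, which confines the tube to a fixed box so that the flux through its bottom face is $O(\e)$, and the divergence theorem then yields $\int_A\eta_n=0$ in the limit $\e\to0$. You instead integrate $\eta_n$ over horizontal slices against an exponential weight satisfying $|\nabla\rho|\le\rho$ and convert (ii) and (iii) into the scalar Gronwall inequality $|f'|\le c^{-1}f$ with $f(0)=0$. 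Your argument is shorter and works directly at the level of $L^\infty$ fields and distributional divergence, with no mollification, no ODE flow, and no Jacobian computations; the one point needing care, which you correctly identify and which is handled by cutting $\rho$ off on annuli where $\int\rho\to0$ (using $\rho,|\nabla\rho|\in L^1$ and $\eta\in L^\infty$), is the passage to the non--compactly supported test function. Both proofs exploit the linearity of $\phi$ in parallel ways: in the paper it makes the slope bound $|\eta|/\eta_n\le 1/c$ uniform, while in yours it makes the Gronwall inequality linear --- for $\phi(t)=ct^2$ your estimate would degrade to $|f'|\le C f^{1/2}$, which admits nonzero solutions with $f(0)=0$, consistently with the counterexamples of Section~\ref{sec:counterexs}.
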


\begin{thm}[$\phi$-rigidity in the plane]\label{thm:rigidity2}
	Let $n= 2$. Then, the $\phi$-rigidity holds in $\R^2$ for any choice of $\phi$ as in Definition \ref{defin:rigidity}.
\end{thm}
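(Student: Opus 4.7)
The plan is to exploit the stream-function representation specific to dimension $n=2$. Since $\eta \in L^\infty(\R^2;\R^2)$ is divergence-free by (ii), there exists a Lipschitz potential $\psi \in C^{0,1}(\R^2)$, unique up to an additive constant, with $\eta = (-\partial_2 \psi, \partial_1 \psi)$ and $\|\nabla \psi\|_\infty = \|\eta\|_\infty =: L$. I would use (i) to normalize $\psi \equiv 0$ on $\{x_2 \leq 0\}$, so that $\psi(\cdot, 0) \equiv 0$ and $|\psi(x_1, x_2)| \leq L|x_2|$ for every $(x_1, x_2)$. Condition (iii) rewrites as $\partial_1 \psi \geq \phi(|\nabla \psi|) \geq 0$, so $x_1 \mapsto \psi(x_1, x_2)$ is monotone non-decreasing and bounded, and the pointwise limits $\psi^{\pm}(x_2) := \lim_{x_1 \to \pm\infty} \psi(x_1, x_2)$ exist, are $L$-Lipschitz in $x_2$, and vanish at $x_2 = 0$.

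Since the monotonicity in $x_1$ yields the sandwich $\psi^{-}(x_2) \leq \psi(x_1, x_2) \leq \psi^{+}(x_2)$, the proof reduces to showing $\psi^{+} \equiv 0$ (the argument for $\psi^{-}$ being symmetric). My approach is a blow-down at $x_1 = +\infty$: let $\Psi_k(x_1, x_2) := \psi(x_1 + k, x_2)$ for $k \in \N$. Each $\Psi_k$ inherits (i)--(iii) by translation invariance, the sequence is equi-Lipschitz, and for every $(x_1, x_2)$ one has the monotone pointwise limit $\Psi_k(x_1, x_2) \to \psi^{+}(x_2)$. By Arzel\`a--Ascoli (equivalently, Dini's theorem for a monotone sequence with continuous limit), the convergence is locally uniform, so the limit $\Psi^{\infty}(x_1, x_2) := \psi^{+}(x_2)$ is Lipschitz and depends only on $x_2$.

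It remains to transfer condition (iii) to $\Psi^{\infty}$. Extracting a further subsequence, $\nabla \Psi_k \rightharpoonup \nabla \Psi^{\infty}$ weakly-$\ast$ in $L^\infty(\R^2;\R^2)$. Testing the pointwise inequality $\partial_1 \Psi_k \geq \phi(|\nabla \Psi_k|)$ against any non-negative $\chi \in C^\infty_c(\R^2)$, the left-hand side passes to the limit by linearity, while the convex functional $v \mapsto \int \chi \, \phi(|v|)\,dx$ is lower semicontinuous under weak-$\ast$ convergence; in the limit one obtains $\partial_1 \Psi^{\infty} \geq \phi(|\nabla \Psi^{\infty}|)$ almost everywhere. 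Because $\Psi^{\infty}$ is independent of $x_1$, $\partial_1 \Psi^{\infty} = 0$, which forces $\phi(|\nabla \Psi^{\infty}|) \equiv 0$; the hypothesis $\phi(t) = 0 \iff t = 0$ then yields $\nabla \Psi^{\infty} \equiv 0$, so $\Psi^{\infty}$ is constant, and combined with $\Psi^{\infty}(\cdot, 0) = \psi^{+}(0) = 0$ this gives $\psi^{+} \equiv 0$. The symmetric blow-down as $k \to -\infty$ gives $\psi^{-} \equiv 0$, and the monotonicity sandwich forces $\psi \equiv 0$ and hence $\eta \equiv 0$.

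The step I expect to require genuine care is the passage to the limit in the nonlinear term $\phi(|\nabla \Psi_k|)$: this is precisely where the convexity of $\phi$ enters, via the weak-$\ast$ lower semicontinuity of $v \mapsto \int \chi \, \phi(|v|)\,dx$, a standard Ioffe-type result that follows from the Fenchel--Moreau representation of $\phi(|v|)$ as a supremum of affine functions of $v$. Everything else is a soft consequence of the stream-function description, which is the intrinsically two-dimensional tool with no direct analogue for $n \geq 3$ and explains the dimensional restriction in the statement.
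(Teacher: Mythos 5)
Your proof is correct, and it reaches the conclusion by a genuinely different route than the paper's. The paper first regularizes $\eta$ by mollification (Lemma~\ref{lem:mollifica}, where the convexity of $\phi$ enters via Jensen's inequality), then applies the divergence theorem on rectangles $(-r,r)\times(0,t)$ to obtain $\int_{-r}^{r}\eta_2(x_1,t)\,dx_1=\int_0^t\bigl(\eta_1(-r,x_2)-\eta_1(r,x_2)\bigr)\,dx_2\le 2t\|\eta\|_\infty$; since $\eta_2(\cdot,t)$ is then non-negative, Lipschitz and in $L^1(\R)$, it vanishes at infinity, condition (iii) forces $\eta_1(\pm r,\cdot)\to 0$, and letting $r\to\infty$ annihilates $\|\eta_2(\cdot,t)\|_1$. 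Your stream function encodes exactly the same two-dimensional mechanism --- the identity above is just $\psi(r,t)-\psi(-r,t)$ computed along two sides of the rectangle, and your bound $|\psi|\le L|x_2|$ is the paper's $2t\|\eta\|_\infty$ --- but you dispense with mollification entirely and replace the pointwise decay of $\eta_1$ at $x_1=\pm\infty$ by a translation-to-infinity compactness argument, transferring (iii) to the limit profile through weak-$*$ lower semicontinuity of the convex integrand, which is where the convexity of $\phi$ enters for you instead. The one point you leave implicit is that $v\mapsto\phi(|v|)$ is convex on $\R^2$: this requires $\phi$ to be non-decreasing, which does follow from convexity together with $\phi\ge 0$ and $\phi(0)=0$, so nothing is lost. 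Your version buys the ability to work directly with the merely bounded field and isolates the role of convexity as a semicontinuity statement; the paper's buys a shorter, more elementary computation once smoothness has been secured by Lemma~\ref{lem:mollifica}.
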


Ideally, one would like to prove Theorem~\ref{thm:rigidity2} in any dimension. Yet, proving $\phi$-rigidity in dimension $n>2$ when $\phi$ is not linear is a rather delicate issue and it would require some further hypotheses. Indeed, we have found counterexamples for the choice $\phi(t)=ct^2$ in any dimension $n\ge 4$, as proved in Theorem~\ref{thm:controesempio}. At the current stage it is unclear to us if Theorem~\ref{thm:rigidity2} may hold in dimension $n=3$. Notice that the counterexamples found in dimension $n\ge 4$ are cylindrically symmetric; however, we prove in Proposition~\ref{prop:n=3} that no such vector field can be a counterexample for $n=3$.
\par
The specific choice $\phi(t)=t^2/2$ is quite interesting as it is closely related with a trace property of certain divergence-measure vector fields. More precisely, it allows us to deduce the existence of the trace of a divergence-measure vector field $\xi$ on an oriented, $\H^1$-rectifiable set $S \subset \R^2$, under a maximality assumption of the \textit{weak normal trace} of $\xi$ at $S$. In general, see Section~\ref{sec:wnt}, given a divergence-measure vector field $\xi$ defined on a bounded open set $\Omega\subset \R^n$, one is able to define its normal trace only in a distributional sense. More specifically, assuming that $\Omega$ is \emph{weakly-regular} (that is, the \textit{perimeter} of $\Omega$ is finite and coincides with the $(n-1)$-dimensional Hausdorff measure of $\de\Omega$, see Definition~\ref{def:weaklyreg}) one can show~\cite[Section 3]{SS16} that there exists a function $[\xi \cdot \nu_\Omega] \in L^\infty(\de \Omega; \H^{n-1})$ such that the following Gauss--Green formula holds for all $\psi \in C^1_c(\R^n)$
\begin{equation}\label{eq:GGintro}
\int_{\Omega} \psi(x)\, d \, \div \xi + \int_{\Omega} \xi(x)\cdot \nabla \psi(x)\, dx = \int_{\de \Omega} \psi(y)\, [\xi \cdot \nu_\Omega](y)\, d\H^{n-1}(y)\,,
\end{equation}
where we have denoted by $\H^{n-1}$ the $(n-1)$-dimensional Hausdorff measure and by $\nu_\Omega$ the measure-theoretic outer normal to $\de\Omega$. Such a function $[\xi \cdot \nu_\Omega]$ is called \emph{weak normal trace} of $\xi$ on $\de\Omega$. For the sake of completeness we recall that a first, fundamental weak version of the Gauss--Green formula is the classical result by De Giorgi~\cite{dG54, dG55} and Federer~\cite{Fed58}, which states that~\eqref{eq:GGintro} holds true in the case $\psi=1$, $\xi \in C^1$ and $\Omega$ with finite perimeter. A further extension due to Vol'pert~\cite{Volpert68,VolpertKhudyaev} holds when $\xi$ is weakly differentiable or BV. In the seminal works of Anzellotti~\cite{Anz83u, Anz83}, the concepts of weak normal trace and of \textit{pairing} between vector fields and (gradients of) functions are introduced for the first time. Since then, the class $\DM(\Omega)$ of divergence-measure, bounded vector fields has been widely studied in view of applications to hyperbolic systems of conservation laws~\cite{CF99, CT05, CTZ09}, to continuum and fluid mechanics~\cite{CF03}, and to minimal surfaces~\cite{LS17, SS16} among many others. In particular, the weak normal trace has been studied in different directions, see for instance~\cite{ACM05, ComiPayne, CT17, CdC18, CdCM19, CrastaDeCicco}.
\par
Despite some explicit characterizations of the weak normal trace are available (see the discussion in Section~\ref{sec:wnt}), a crucial issue coming with this distributional notion is that it is not possible to recover the pointwise value of such a trace by a standard, measure-theoretic limit, see Example~\ref{ex:wild}. However, assuming that $\|\xi\|_\infty \le 1$ and that the weak normal trace $[\xi\cdot \nu_S]$ attains the maximal value $1$ at some point $x\in S$, it would seem quite natural to expect that $\xi$ cannot oscillate too much around $x$, to ensure a maximal outflow at $x$. Thus, one is led to conjecture the existence of the classical trace, i.e. the validity of the formula  $[\xi\cdot \nu_S](x) = \nu_S(x)$. Indeed, this is exactly what we are able to prove, limitedly to the case $n=2$, by employing Theorem~\ref{thm:rigidity2} with the specific choice $\phi(t)=t^2/2$. 

\begin{thm}\label{thm:main}
	Let $\xi\in \DM(\R^2)$ and $S \subset \R^2$ an oriented $\H^{1}$-rectifiable with locally bounded $\H^{1}$-measure. Then, for $\H^1$-a.e. $x_0\in S$ such that $[\xi\cdot \nu_S](x_0) = \|\xi\|_\infty$ one has
	\[
	\aplim_{x\to  x_0^-} \xi(x) = \|\xi\|_\infty \ \nu_S(x_0)\,.
	\]
\end{thm}
In the above theorem the approximate limit is ``one-sided'' according to Definition~\ref{def:aplim}. Notice that the same conclusion of the theorem applies when the weak normal trace attains a local maximum for the modulus of the vector field, that is, when there exists an open set $U$ containing $x_0$ such that $[\xi\cdot \nu_S](x_0) \ge |\xi(x)|$ for almost every $x\in U$. Of course, by a scaling argument one can always restrict to vector fields $\xi$ with $\|\xi\|_\infty \le 1$.
\par 
The statement of Theorem~\ref{thm:main} holds for $\H^1$-a.e. $x_0\in S$. More precisely, one needs $x_0\in S$ to be such that: the normal of $S$ at $x_0$ is defined; it is a Lebesgue point for the weak normal trace; and $\div \xi$ does not concentrate around it. Asking $x_0$ to satisfy these hypotheses, makes us indeed discard an $\H^1$-negligible set of $S$.
\par
As the proof heavily relies on Theorem~\ref{thm:rigidity2}, we are able to show it only in dimension $2$. Despite there exist counterexamples to the quadratic rigidity in $\R^n$ when $n \ge 4$, we cannot exclude that Theorem~\ref{thm:main} might be true in any dimension. Were it false, one should be able to construct a vector field with maximal normal trace at some $(n-1)$-submanifold $S$, whose blow-ups at most points of $S$ are not unique. Indeed, the existence of the classical, one-sided trace of $\xi$ at $x_0$ is equivalent to the uniqueness of the blow-up of $\xi$ at $x_0$ (see Proposition~\ref{prop:zblow}). This essentially corresponds to a pointwise almost-everywhere convergence property. However, a maximality condition on the value of the weak normal trace  (viewed as a weak-limit of measures induced by classical traces on approximating smooth surfaces) might enforce no more than a $L^1$-type convergence, in analogy with what happens to a sequence of negative functions that weakly converge to zero. The fact that, in turn, $L^1$-convergence does not imply almost everywhere convergence (unless one extracts suitable subsequences) explains why proving Theorem~\ref{thm:main} is so delicate.
\par
Finally, we mention that Theorem~\ref{thm:main} allows us to strengthen the main result of our previous paper~\cite{LS17}. In there we considered solutions to the prescribed mean curvature equation in a set $\Omega$, in the so-called \textit{extremal case}. The extremality condition for the prescribed mean curvature equation
\[
\div \frac{\nabla u}{\sqrt{1+|\nabla u|^2}} = H\qquad \text{on $\Omega$}
\] 
is a critical situation for the existence of solutions, occurring when the following, necessary condition for existence 
\[
\left|\int_E H(x)\, dx\right| < P(E) \qquad \text{for all $E\subset\subset\Omega$}
\]
becomes an equality precisely at $E=\Omega$. When $\de\Omega$ is smooth, Giusti proved in his celebrated paper~\cite{Giusti1978} that the above necessary condition is also sufficient for existence, and moreover that the extremal case can be characterized by other properties, among which the uniqueness of the solution up to vertical translations and the vertical contact of its graph with the boundary of the domain. In the physically meaningful case, i.e. $\Omega\subset \R^2$, the extremal case for the prescribed mean curvature equation corresponds to the capillarity phenomenon for perfectly wetting fluids within a cylindrical container put in a zero gravity environment. 
\par
In our previous work~\cite{LS17} we extended Giusti's result to the wider class of weakly-regular domains, obtaining an analogous characterization that involves the weak normal trace of the vector field 
\[
Tu = \frac{\nabla u}{\sqrt{1+|\nabla u|^2}}
\]
on $\de\Omega$. In virtue of the result proved here, we obtain that the boundary behaviour of the capillary solution in the cylinder $\Omega\times \R$ actually improves to a vertical contact realized in the classical trace sense, rather than just in the sense of the weak normal trace. In other words, the normalized gradient $Tu$ is shown to admit a classical trace (equal to the outer normal $\nu_\Omega$) at $\H^1$-almost-every point of $\de\Omega$. We remark that, at present, no other technique, like the one based on regularity for almost-minimizers of the perimeter, seems to be applicable in the case of a generic weakly-regular domain. The reason is that the boundary of the cylinder $\Omega\times \R$ is not smooth enough to let the approaching boundary of the subgraph of the solution $u$ be uniformly regularized via the standard excess-decay mechanism for almost-minimizers of the perimeter. 
\par
Briefly, the paper is organized as follows. In Section~\ref{sec:prelim} we lay the notation, recall some basic facts from Geometric Measure Theory and weak normal traces.  In Section~\ref{sec:rigidity} we give the proofs of Theorem~\ref{thm:rigidity1} and of Theorem~\ref{thm:rigidity2}. Section~\ref{sec:counterexs} presents the construction of a counterexample to the $ct^2$-rigidity for $n\ge 4$. Finally,  Section~\ref{sec:traces} is devoted to the weak normal trace and to the proof of Theorem~\ref{thm:main}. 

\subsection*{Acknowledgements} The authors would like to thank Carlo Mantegazza for fruitful discussions on the topic, and Guido De Philippis for suggesting the ``flow-tube'' strategy used to prove Theorem~\ref{thm:rigidity1}.


\section{Preliminary notions and facts}\label{sec:prelim}
 
\subsection{Notation}
 We first introduce some basic notations. Given $n\ge 2$, $\R^{n}$ denotes the Euclidean $n$-dimensional space, $\Rnp$ the upper half-space $\{x\in \R^n:\ x_n>0\}$, and $\Rno$ the boundary of $\Rnp$. For any $x\in \R^{n}$ and $r>0$, $B_{r}(x)$ denotes the Euclidean open ball of center $x$ and radius $r$. Given a unit vector $v\in \R^n$ we set $B^{v}_r(z) = \{x\in B_r(z) \,:\, (x-z)\cdot v >0 \}$. Given a Borel set $E\subset \R^n$ we denote by $\chi_E$ its characteristic function, by $|E|$ its $n$-dimensional Lebesgue measure, and by $\H^{d}(E)$ its Hausdorff $d$-dimensional measure. We set $E_{x,r} = r^{-1}(E-x)$, where $E\subset \R^{n}$, $x\in\R^{n}$, and $r>0$. Given $\Omega\subset \R^{n}$ an open set, we write $E\subset\subset \Omega$ whenever the topological closure $\overline E$ of $E$ is a compact subset of $\Omega$. Whenever a measurable function, or vector field, $f$ is defined on $\Omega$, we denote by $\|f\|_{\infty}$ its $L^{\infty}$-norm on $\Omega$. We denote by $\DM(\Omega)$ the space of bounded vector fields defined in $\Omega$ whose divergence is a Radon measure. For brevity we set $\DM := \DM(\R^n)$. It is convenient to consider the restriction to $\DM(\Omega)$ of the weak-$*$ topology of $L^\infty$: given a sequence $\{v_k\}_k \subset \DM(\Omega)$, we say that $v_k$ converges to $v\in \DM(\Omega)$ in the weak-$*$ topology of $L^\infty$, and write $v_k\rightharpoonup v$ in $L^\infty$-$w^*$, if for every $f\in L^1(\Omega;\R^n)$ one has 
 \[
 \int_{\Omega} f \cdot v_k\, dx \to  \int_{\Omega} f \cdot v\, dx,\qquad \text{as }k\to \infty\,.
 \]
 
\subsection{Basic definitions in Geometric Measure Theory}

We now recall some basic definitions and facts from Geometric Measure Theory and, in particular, from the theory of sets of locally finite perimeter. 
 
 \begin{defin}[Points of density $\alpha$]\label{def:densityalpha}
Let $E$ be a Borel set in $\R^n$, $x\in \R^n$. If the limit
\[
\theta(E)(x) := \lim_{r\to0^+} \frac{|E\cap B_r(x)|}{|B_r(x)|}
\]
exists, it is called the density of $E$ at $x$. In general $\theta(E)(x) \in [0,1]$, hence, we define the set of points of density $\alpha\in [0,1]$ for $E$ as
\[
E^{(\alpha)} := \left\{ x\in \R^n\,:\, \theta(E)(x) = \alpha\right\}\,.
\]
\end{defin}


\begin{defin}[One-sided approximate limit]\label{def:aplim}
Let $\Omega\subset \R^n$ be an open set, and let $S\subset \Omega$ be an oriented $\H^{n-1}$-rectifiable set. Take $z \in S$ such that the exterior normal $\nu = \nu_S(z)$ is defined, and choose a measurable function, or vector field, $f$ defined on $\Omega$. We write
\[
\aplim_{x\to z^-} f(x) = w
\]
if for every $\alpha>0$ the set $\{x\in \Omega \cap B^{-\nu}_1(z):\ |f(x)-w|\ge \alpha\}$ has density $0$ at $z$.
\end{defin}
 
\begin{defin}[Perimeter]\label{def:per}
	Let $E$ be a Borel set in $\R^n$. We define the perimeter of $E$ in an open set $\Omega\subset \R^{n}$ as
	\[
	P(E; \Omega):=\sup \left\{ \int_\Omega \chi_E(x) \div \psi(x)\, dx\,: \psi\in C^1_c(\Omega;\, \R^n)\,, \|\psi\|_{\infty} \leq 1\right\}\,.
	\]
	We set $P(E) = P(E;\R^{n})$. If $P(E;\Omega)<\infty$ we say that $E$ is a set of finite perimeter in $\Omega$. In this case (see \cite{Mag12}) one has that the perimeter of $E$ coincides with the total variation $|D\chi_{E}|$ of the vector-valued Radon measure $D\chi_{E}$ (the distributional gradient of $\chi_{E}$), which is defined for all Borel subsets of $\Omega$ thanks to Riesz Theorem. We also recall that $P(E;\Omega) = \H^{n-1}(\de E\cap \Omega)$ when $\de E\cap \Omega$ is Lipschitz.
\end{defin}

\begin{thm}[De Giorgi Structure Theorem]\label{thm:DeGiorgi}
Let $E$ be a set of finite perimeter and let $\de^*E$ be the reduced boundary of $E$ defined as
\[
\de^* E:=\left \{x\in \de E\,:\, \lim_{r\to 0^+} \frac{D\chi_E(B_r(x))}{| D\chi_E|(B_r(x))} = -\nu_E(x) \in \mathbb{S}^{n-1} \right\}\,.
\]
Then,
\begin{itemize}
\item[(i)] $\de^{*}E$ is countably $\H^{n-1}$-rectifiable;

\item[(ii)] for all $x\in \de^{*}E$, $\chi_{E_{x,r}} \to \chi_{H_{\nu_E(x)}}$ in $L^{1}_{\text{loc}}(\R^{n})$ as $r\to 0^{+}$, where $H_{\nu_{E}(x)}$ denotes the half-space through $0$ whose exterior normal is $\nu_{E}(x)$;

\item[(iii)] for any Borel set $A$, $P(E;A) = \H^{n-1}(A\cap \de^{*}E)$, thus in particular $P(E)=\H^{n-1}(\partial^* E)$;

\item[(iv)] $\int_{E}\div \psi = \int_{\de^{*}E} \psi\cdot \nu_{E}\, d\H^{n-1}$ for any $\psi\in C^{1}_{c}(\R^{n};\R^{n})$.
\end{itemize}
\end{thm}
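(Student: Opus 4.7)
The plan is to follow De Giorgi's classical strategy, with the blow-up property (ii) as the core from which the other parts follow.

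First I would establish the fundamental density estimates for the perimeter measure: at every point $x\in \de^* E$ there exist dimensional constants $c,C>0$ (for all sufficiently small $r$) such that $c r^{n-1} \le |D\chi_E|(B_r(x)) \le C r^{n-1}$, and moreover $|E\cap B_r(x)|/|B_r(x)|$ stays bounded away from both $0$ and $1$. These estimates come from comparing $E\cap B_r(x)$ and $B_r(x)\setminus E$ via the relative isoperimetric inequality together with a differential inequality for $r \mapsto |E\cap B_r(x)|$ obtained from the coarea formula.

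Next, to prove (ii), I would consider the rescaled family $\{\chi_{E_{x,r}}\}_{r>0}$. The uniform perimeter bounds give $BV_{\text{loc}}$-compactness, so up to subsequences one obtains a limit set $F$. The definition of $\de^*E$ forces the polar factor $D\chi_{E_{x,r}}/|D\chi_{E_{x,r}}|$ to concentrate at $-\nu_E(x)$ as $r\to 0^+$, so passing to the limit one gets $D\chi_F = -\nu_E(x)\, |D\chi_F|$. This says that the distributional partial derivatives of $\chi_F$ in any direction orthogonal to $\nu_E(x)$ vanish, hence $\chi_F$ depends only on $x\cdot \nu_E(x)$; combined with the two-sided density estimates it follows that $F = H_{\nu_E(x)}$.

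With (ii) in hand, (i) follows from a Whitney-type extension: every point of $\de^* E$ admits a unique approximate tangent hyperplane with quantitative excess decay, so $\de^*E$ can be covered, up to an $\H^{n-1}$-null set, by countably many $C^1$ hypersurfaces. Part (iii) is a consequence of the refined density result $\lim_{r\to 0^+} |D\chi_E|(B_r(x))/(\omega_{n-1} r^{n-1}) = 1$ for $x \in \de^*E$, combined with the Besicovitch differentiation theorem applied to $|D\chi_E|$ and to the restriction of $\H^{n-1}$ to $\de^*E$. Finally, (iv) follows from the polar decomposition $D\chi_E = -\nu_E\, |D\chi_E|$ and (iii) by reading the distributional identity $\int_E \div\psi\, dx = -\int \psi\cdot d D\chi_E$ against the induced surface measure.

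The main obstacle is the sharp blow-up identification in (ii): controlling the oscillation of the direction of $D\chi_E$ at small scales, and then exploiting the resulting one-direction constraint on $\chi_F$ to reduce to a one-dimensional problem. Once that is secured, (i), (iii), (iv) follow from standard applications of the relative isoperimetric inequality, the coarea formula, and Besicovitch differentiation.
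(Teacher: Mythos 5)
This is a classical background result that the paper recalls without proof (it is quoted as standard, with the reference given for perimeter theory being Maggi's book), so there is no in-paper argument to compare against. Your outline is the standard De Giorgi strategy and is essentially correct: density estimates from the relative isoperimetric inequality and the coarea differential inequality; $BV_{\text{loc}}$ compactness of the blow-ups; identification of the limit via the polar constraint $D\chi_F=-\nu_E(x)\,|D\chi_F|$, which reduces to a one-dimensional monotonicity statement; rectifiability by Whitney/Lipschitz-graph covering; and (iii)--(iv) by Besicovitch differentiation together with the polar decomposition of $D\chi_E$. Two points where your sketch elides genuine work: first, pinning down that the limiting half-space passes through the origin requires either the two-sided volume density bounds at \emph{every} scale of the limit set or the convergence $|D\chi_{E_{x,r}}|(B_\rho)\to|D\chi_F|(B_\rho)$, which itself needs the strict convexity argument $|D\chi_F|(B_\rho)\ge |D\chi_F(B_\rho)|=\lim|D\chi_{E_{x,r}}(B_\rho)|=\lim|D\chi_{E_{x,r}}|(B_\rho)$ coming from the definition of $\de^*E$; second, the blow-up property alone does not give ``quantitative excess decay'' pointwise, and the passage to countably many $C^1$ (or Lipschitz) pieces needs an Egorov-type uniformization of the convergence before the Whitney extension or cone criterion can be applied. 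With those two steps filled in, the argument is the classical one and proves all four assertions.
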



Finally, we recall the notion of weakly-regular set which is useful for the next section.

\begin{defin}[Weakly-regular set]\label{def:weaklyreg}
An open, bounded set $\Omega$ of finite perimeter, such that $P(\Omega)=\H^{n-1}(\de \Omega)$, is said to be \emph{weakly-regular}.
\end{defin}

For the sake of completeness, we recall that examples of weakly-regular sets are, for instance, Lipschitz sets, or minimal Cheeger sets with $\H^{n-1}(\de \Om\cap \Om^{(1)}) = 0$ (see~\cite{LS17, Sar17} for an account of these facts,~\cite{Leo15, Par11, PS17} for an introduction to Cheeger sets, and~\cite{LNS17, LS17a} for recent results and examples).

\subsection{The weak normal trace and the Gauss--Green formula}\label{sec:wnt}

The weak normal trace was first defined in~\cite{Anz83} for a vector field with divergence in $L^1(\Omega)$, when $\Omega$ is bounded and Lipschitz. When $\Omega$ is a generic open set, and denoting by $\Leb^n$ the Lebesgue measure on $\R^n$, the weak normal trace $[\xi\cdot \nu_\Omega]$ can be defined as the distribution
\[
\langle [\xi\cdot \nu_\Omega], \psi \rangle :=
\int_{\Omega} \psi\, d\, \div \xi\ +\ \int_{\Omega} \xi \cdot \nabla \psi\, d\Leb^n,\qquad \psi\in C^\infty_c(\R^n)\,. 
\]
Taking $E\subset\subset \Omega$ of class $C^1$, and defining $[\xi\cdot \nu_E]$ in the same way, one can show that the distribution is represented by an $L^\infty$ function defined on $\de E$, that we still denote by $[\xi\cdot \nu_E]$ with a slight abuse of notation, so that 
\[
\langle [\xi\cdot \nu_E], \psi \rangle = \int_{\de E}\psi [\xi\cdot \nu_E]\, d\H^{n-1}\,.
\]
Then, up to showing a locality property of the weak normal trace on $C^1$ domains (see~\cite{ACM05}) it is possible to define $[\xi\cdot \nu_S]$ for any given, oriented $(n-1)$-rectifiable set $S$ contained in $\Omega$. We remark, however, that some particular care has to be taken when $\Omega$ is a bounded open set with finite perimeter (and $\xi$ is a-priori defined only on $\Omega$). In order to guarantee that the distributional weak normal trace is represented by an $L^\infty$ function defined on $S = \de\Omega$ one has to additionally assume that $\Omega$ is weakly-regular. 

Thus, when $\Omega$ is weakly-regular one obtains the Gauss--Green formula below (see~\cite[Section 3]{SS16}) exploiting the pairing between vector fields in $ \DM(\Omega)$ and functions in $C^1_c(\R^n)$. Another version of the formula with a slightly different pairing can be found in~\cite{LS17}. 
\begin{thm}[Generalized Gauss--Green formula]\label{thm:GG}
Let $\Omega\subset \R^n$ be a weakly-regular set. For any $\xi\in \DM(\Omega)$ and $\psi\in C^1_c(\R^n)$ one has
\begin{equation}\label{eq:GG}
\int_{\Omega} \psi\, d \div \xi + \int_{\Omega} \xi\cdot \nabla \psi\, dx = \int_{\de^*\Omega} \psi\, [\xi\cdot \nu_\Omega]\, d\H^{n-1}\,,
\end{equation}
where $[\xi\cdot \nu_\Om] \in  L^\infty(\de \Omega; \H^{n-1})$ is the so-called weak normal trace of $\xi$ on $\de^*\Omega$. 
\end{thm}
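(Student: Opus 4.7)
The plan is to define the weak normal trace distributionally on $C^1_c(\R^n)$, to show that it is supported on $\de\Omega$ and controlled in terms of the $L^1(\de^*\Omega, \H^{n-1})$-norm of the test function, and then to identify it with an $L^\infty$ density via the duality between $L^1$ and $L^\infty$. Concretely, for $\psi \in C^1_c(\R^n)$ I set
\[ T(\psi) := \int_\Omega \psi\, d\div\xi + \int_\Omega \xi\cdot\nabla\psi\, dx, \]
which is linear in $\psi$ and vanishes whenever $\spt\psi \subset \Omega$ by the very definition of distributional divergence. Hence $T$ depends only on the boundary values of $\psi$, which suggests that $T(\psi)$ should be expressible as an integral over $\de^*\Omega$ against some $L^\infty$ density.

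Next I would approximate $\xi$ by smooth vector fields $\xi_k \in C^\infty(\overline\Omega; \R^n)$ satisfying $\xi_k \to \xi$ in $L^1(\Omega)$, $\|\xi_k\|_\infty \leq \|\xi\|_\infty$, together with strict convergence $\div\xi_k \to \div\xi$ as Radon measures on $\Omega$. Such an approximation is available through a Meyers--Serrin type construction (a partition of unity combined with mollification in each patch), which is standard in the theory of $\DM$ fields. For each $\xi_k$, the classical Gauss--Green identity of Theorem~\ref{thm:DeGiorgi}(iv), applied to the smooth compactly supported vector field $\psi\xi_k$, yields
\[ T_k(\psi) := \int_\Omega \psi\,\div\xi_k\, dx + \int_\Omega \xi_k\cdot\nabla\psi\, dx = \int_{\de^*\Omega} \psi\, (\xi_k\cdot\nu_\Omega)\, d\H^{n-1}, \]
from which the a priori bound $|T_k(\psi)| \leq \|\xi\|_\infty \int_{\de^*\Omega}|\psi|\, d\H^{n-1}$ is immediate. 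Passing to the limit $k\to\infty$, the $L^1$ convergence of $\xi_k$ and the strict convergence of $\div\xi_k$ tested against the continuous bounded $\psi$ give $T_k(\psi) \to T(\psi)$, so the same bound holds for $T$.

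Finally, since $\psi \mapsto \psi|_{\de^*\Omega}$ sends $C^1_c(\R^n)$ onto a dense subspace of $L^1(\de^*\Omega, \H^{n-1})$, the preceding estimate lets $T$ extend uniquely to a bounded linear functional on $L^1(\de^*\Omega, \H^{n-1})$ of norm at most $\|\xi\|_\infty$. By the Riesz representation theorem there exists $[\xi\cdot\nu_\Omega] \in L^\infty(\de^*\Omega, \H^{n-1})$ with $\|[\xi\cdot\nu_\Omega]\|_\infty \leq \|\xi\|_\infty$ such that $T(\psi) = \int_{\de^*\Omega}\psi\, [\xi\cdot\nu_\Omega]\, d\H^{n-1}$, which rearranges to \eqref{eq:GG}. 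The main obstacle I would expect lies in the role of the weak-regularity hypothesis: the condition $P(\Omega)=\H^{n-1}(\de\Omega)$ is essential, because it prevents $T$ from concentrating mass on the anomalous set $\de\Omega \setminus \de^*\Omega$ (where no measure-theoretic outer normal exists), which would obstruct the representation as an $\H^{n-1}$-integrable function; additionally, the construction of the smooth approximation $\xi_k$ with simultaneous $L^1$ and strict divergence convergence requires a nontrivial adaptation of the Meyers--Serrin argument tailored to $\DM$ fields on a weakly-regular domain.
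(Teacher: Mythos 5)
The paper itself does not prove this theorem; it is quoted from \cite[Section 3]{SS16}, so your proposal can only be judged on its own merits. Its outer skeleton (define $T$ distributionally, prove $|T(\psi)|\le \|\xi\|_\infty\int_{\de^*\Omega}|\psi|\,d\H^{n-1}$, conclude by $L^1$--$L^\infty$ duality) is indeed the standard and correct one. The problem is the step that is supposed to deliver the a priori bound. You approximate $\xi$ by fields $\xi_k\in C^\infty(\overline\Omega;\R^n)$ via ``a Meyers--Serrin type construction'' and then apply Theorem~\ref{thm:DeGiorgi}(iv) to $\psi\xi_k$. A Meyers--Serrin (partition of unity plus mollification) argument only produces fields smooth in the \emph{open} set $\Omega$: the mollification radii must shrink to zero near $\de\Omega$, so there is no continuity up to $\overline\Omega$, let alone a $C^1_c(\R^n)$ extension. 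But Theorem~\ref{thm:DeGiorgi}(iv) requires the test field to be $C^1$ on all of $\R^n$; applying the divergence theorem on a rough finite-perimeter set to a field that is merely smooth and bounded \emph{inside} $\Omega$ is essentially the statement you are trying to prove. The alternative of mollifying the zero-extension of $\xi$ to $\R^n$ is circular for the same reason: knowing that $\xi\chi_\Omega\in\DM(\R^n)$ is equivalent to the theorem.

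There is a structural symptom of this gap: your derivation of the key estimate never uses the weak-regularity hypothesis, even though you correctly observe at the end that it is essential. If a global approximation with the properties you list existed for every $\xi\in\DM(\Omega)$ and every bounded finite-perimeter $\Omega$, the bound $|T(\psi)|\le\|\xi\|_\infty\int_{\de^*\Omega}|\psi|\,d\H^{n-1}$ would hold without weak regularity, and this is false. For instance, take $\Omega=B_1\setminus K$ with $K$ a compact slit of positive $\H^{n-1}$-measure contained in $\Omega^{(1)}$, and a bounded field jumping across $K$ and divergence-free in $\Omega$: its normal trace concentrates on $K\subset\de\Omega\setminus\de^*\Omega$, so no representation on $\de^*\Omega$ is possible. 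The proof in \cite{SS16} (and in \cite{LS17}) avoids this by approximating the \emph{domain} from inside: one takes smooth superlevel sets $\Omega_\delta$ of $\chi_\Omega\ast\rho_\delta$, compactly contained in $\Omega$, where the mollified field is genuinely smooth and the classical divergence theorem applies; this gives $|T(\psi)|\le\|\xi\|_\infty\limsup_\delta\int_{\de\Omega_\delta}|\psi|\,d\H^{n-1}$, and weak regularity is then used precisely to show $\H^{n-1}\llcorner\de\Omega_\delta\rightharpoonup\H^{n-1}\llcorner\de^*\Omega$ (lower semicontinuity of perimeter plus convergence of the total masses $P(\Omega_\delta)\to P(\Omega)=\H^{n-1}(\de\Omega)$), which upgrades the bound to the one over $\de^*\Omega$. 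Replacing your approximation step by this interior domain approximation would repair the argument; the duality conclusion you draw afterwards is fine.
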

We remark for completeness that more general formulas have been recently obtained for pairings between $\xi\in \DM(\R^n)$, $\psi\in BV_{\text{loc}}(\R^n)\cap L^\infty_{\text{loc}}(\R^n)$ and $\Omega$ bounded with finite perimeter (see in particular~\cite{CrastaDeCicco,SS17}). 

As already observed by Anzellotti in~\cite{Anz83u} (see also~\cite{LS17}), the weak normal trace is a proper extension of the scalar product between the exterior normal $\nu_\Omega$ and the trace of the vector field $\xi$, assuming that the latter exists in the classical sense. More generally, one expects that the weak normal trace is obtained as a weak-limit of scalar products between (suitable averages of) the vector field $\xi$ and the normal field $\nu_S$. This actually corresponds to~\cite[Proposition 2.1]{Anz83u}, which says that, whenever $S$ is of class $C^1$ and $\div \xi \in L^1$, then 
\[
\frac{1}{\omega_n r^n}\int_{y\in B_r(\cdot)} \xi(y)\cdot \nu_S(\cdot)\, d\Leb^n(y)\to [\xi,\nu_S]\quad \text{in } L^\infty(S)-w^*\,,\quad \text{as $r\to 0^+$.}
\]
It is worth recalling that there exist also some pointwise characterizations of the weak normal trace. A first one is obtained by testing \eqref{eq:GG} with a function $\psi$ that approximates the characteristic function of a ball $B_r(x)$ for $x\in S$. Taking $x$ in a suitable subset of $S$ of full $\H^{n-1}$-measure one obtains
\[
[\xi,\nu_S](x) = \aplim_{r\downarrow 0}\ \frac{1}{\omega_{n-1}r^n} \int_{y\in \de B_r^-(x)} \xi(y) \cdot (x-y)\, d\H^{n-1}(y),\qquad \text{for $\H^{n-1}$-a.e. $x\in S$}\,,
\]
where, for $r>0$ small enough, $\de B_r^-(x)$ denotes the part of $\de B_r(x)$ which is on the side of $S$ pointed by $-\nu_S(x)$.  
A second characterization given in~\cite[Proposition 2.2]{Anz83u}, assuming $S$ of class $C^1$, is the following. Given $x\in S$ and $r,\rho>0$ small enough, let $S_\rho(x) = S\cap B_\rho(x)$ and set 
\[
Q_{r,\rho}(x) = \{z= y-t\nu_S(x):\ y\in S_\rho(x),\ t\in (0,r)\}\,,
\]
\[
\nu_{x}(z) = \nu_S(y)\qquad \text{if }z=y-t\nu_S(x)\,.
\]
Note that $\nu_x(z)$ is well-defined on $Q_{r,\rho}(x)$ as soon as $r,\rho$ are small enough. In other words, $Q_{r,\rho}(x)$ is a curvilinear rectangle foliated by translates of $S\cap B_\rho(x)$ in the $-\nu_S(x)$ direction. It is proved in~\cite{Anz83u} that 
\[
[\xi,\nu_S](x) = \lim_{\rho\downarrow 0} \lim_{r\downarrow 0} \frac{1}{\omega_{n-1}\rho^{n-1}r} \int_{z\in Q_{r,\rho}(x)} \xi(z)\cdot \nu_x(z)\, d\Leb^{n}(z)\,,
\]
for $\H^{n-1}$-almost-every $x\in S$. This result can be also obtained by testing~\eqref{eq:GG} with a $C^1$ function $\psi$ that satisfies $\psi=1$ on $S_\rho$, $\psi=0$ on $S_\rho - r\nu_S(x)$, and which is linear on any segment $(y,y-r\nu_S(x))$, $y\in S_\rho$. 

Despite the characterizations described above, one can easily construct examples of vector fields like the one presented below (a variant of a piece-wise constant one defined in~\cite{Anz83u}) that illustrate possible wild behaviours of divergence-free vector fields for which the weak normal trace on $S$ is well-defined. More precisely, the example below shows that, in general, the weak normal trace does not coincide with any classical, or measure-theoretic, limit of the scalar product of the vector field with the normal to $S$, even when $S$ is of class $C^\infty$ and the vector field is divergence-free and smooth in a neighborhood of $S$ (minus $S$ itself). 
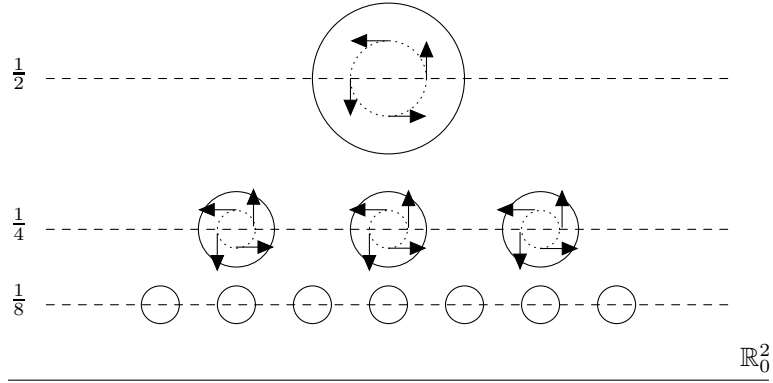
\begin{figure}[t]
	\centering
	\begin{tikzpicture}[line cap=round,line join=round,>=triangle 45,x=1.0cm,y=1.0cm]
	\clip(-5.5,-4.2) rectangle (5.5,2);
	\draw(0.,0.) circle (1.cm);
	\draw(-2.,-2.) circle (0.5cm);
	\draw(0.,-2.) circle (0.5cm);
	\draw(2.,-2.) circle (0.5cm);
	\draw(-3.,-3.) circle (0.25cm);
	\draw(-2.,-3.) circle (0.25cm);
	\draw(-1.,-3.) circle (0.25cm);
	\draw(0.,-3.) circle (0.25cm);
	\draw(1.,-3.) circle (0.25cm);
	\draw(2.,-3.) circle (0.25cm);
	\draw(3.,-3.) circle (0.25cm);
	\draw (4.5,-3.4) node[anchor=north west] {$\R^2_0$};
	\draw [line width=0.4pt,dash pattern=on 3pt off 3pt] (-4.5,0.)-- (4.5,0.);
	\draw [line width=0.4pt,dash pattern=on 3pt off 3pt] (-4.5,-2.)-- (4.5,-2.);
	\draw [line width=0.4pt,dash pattern=on 3pt off 3pt] (-4.5,-3.)-- (4.5,-3.);
	\draw [line width=0.4pt] (-5,-4.)-- (5,-4.);
	\draw (-5.1,0.4) node[anchor=north west] {$\frac12$};
	\draw (-5.1,-1.6) node[anchor=north west] {$\frac14$};
	\draw (-5.1,-2.6) node[anchor=north west] {$\frac18$};
	\draw [dotted] (0.,0.) circle (0.5cm);
	\draw [->,line width=0.4pt] (0.,0.5) -- (-0.5,0.5);
	\draw [->] (-0.5,0.) -- (-0.5,-0.5);
	\draw [->] (0.5,0.) -- (0.5,0.5);
	\draw [->] (0.,-0.5) -- (0.5,-0.5);
	\draw [dotted] (-2.,-2.) circle (0.25cm);
	\draw [dotted] (0.,-2.) circle (0.25cm);
	\draw [dotted] (2.,-2.) circle (0.25cm);
	\draw [->] (-2.0036990006562547,-2.235643179720274) -- (-1.503699000656255,-2.235643179720274);
	\draw [->] (0.,-2.25) -- (0.5,-2.25);
	\draw [->] (1.9975421684446364,-2.2546967090969448) -- (2.497542168444636,-2.2546967090969448);
	\draw [
	->] (0.2636709951675837,-1.9735020046534375) -- (0.26367099516758336,-1.4735020046534373);
	\draw [->] (2.283345109094699,-1.9735020046534366) -- (2.2833451090947,-1.4735020046534373);
	\draw [->] (-1.7750566481362058,-1.9544484752767661) -- (-1.7750566481362045,-1.4544484752767664);
	\draw [->,line width=0.4pt] (-2.0036990006562556,-1.7402514159268303) -- (-2.5036990006562556,-1.7402514159268303);
	\draw [->,line width=0.4pt] (-0.02213194548248045,-1.74025141592683) -- (-0.5221319454824804,-1.74025141592683);
	\draw [->,line width=0.4pt] (1.9213280509379527,-1.74025141592683) -- (1.4213280509379527,-1.74025141592683);
	\draw [->] (-2.251394882552978,-2.059553179123679) -- (-2.2513948825529773,-2.559553179123679);
	\draw [->] (-0.2507742980025313,-2.059553179123679) -- (-0.2507742980025314,-2.559553179123679);
	\draw [->] (1.7307927571712438,-2.0404996497470083) -- (1.7307927571712438,-2.5404996497470083);
	\end{tikzpicture}
	\caption{A twisting vector field defined on $\R^2_+$.}
	\label{EsQ}
\end{figure}
\begin{ex}\label{ex:wild}
	Let us set $\R^2_{+} =\{(x,y):\ y>0\}$, $S = \{(x,y):\ y=0\}$, and $\nu = (0,-1)$. 
	For $i\ge 1$ and $j=1,\dots,2^{i}-1$ we set $x_{ij} = \frac{j}{2^{i}}$, $y_{i} = \frac{1}{2^{i}}$, $r_{i}= \frac{1}{2^{i+2}}$. Then, for such $i$ and $j$ we take $f_{i}\in C^{\infty}_{c}(\R)$ with compact support in $(0,r_{i})$, so that in particular $f_{i}(0)=f_{i}(r_{i})=0$, and define $p_{ij} = (x_{ij},y_{i})$. Notice that by our choice of parameters, the balls $\{B_{ij} = B_{r_{i}}(p_{ij})\}_{i,j}$ are pairwise disjoint. Whenever $p\in B_{ij}$ we set
	\[
	\xi(p) = f_{i}(|p-p_{ij}|)(p-p_{ij})^{\perp}\,,
	\]
	while $\xi(p) = 0$ otherwise (see Figure~\ref{EsQ}). One can suitably choose $f_{i}$ so that $\|\xi\|_{L^{\infty}(B_{ij})}=1$ for all $i,j$. Moreover $\div \xi = 0$ on $\R^2_+$ and thus for any $\psi \in C^1_c(\R^2)$, by the Gauss--Green formula and owing to the definition of $\xi$, one has
\[
	\int_{S}\psi\, [\xi\cdot\nu]\, d\H^{1} = \int_{\R^2_+}\xi\cdot D\psi = \sum_{i,j}\int_{B_{ij}} \xi\cdot D\psi = \sum_{i,j}\int_{\de B_{ij}} \psi\, \xi\cdot \nu_{ij} = 0\,, 
\]
	so that $[\xi\cdot \nu] = 0$ on $S$. At the same time, $\xi$ twists in any neighborhood of any point $p_{0} = (x_{0},0)$, $x_{0}\in (0,1)$, and the second component of the average of $\xi$ on half balls centered at $p_0$ has a $\liminf$ strictly smaller than its $\limsup$ as $r\downarrow 0$. In conclusion, the scalar product $\xi(p)\cdot \nu(p_{0})$ does not converge to $0$ in any pointwise or measure-theoretic sense. 
\end{ex}
\medskip


\section{Proofs of the rigidity results}
\label{sec:rigidity}

We start by proving that the convexity assumption on the function $\phi$ makes Definition~\ref{defin:rigidity} essentially equivalent to a weaker one, in which the $C^\infty$ smoothness and the global Lipschitz-continuity of the vector field $\eta$ are required. 	
\begin{lem}\label{lem:mollifica}
Let $\phi$ and $\eta$ be as in Definition~\ref{defin:rigidity}. Let $\rho_\e$ be the standard mollifier supported in a ball of radius $\e>0$. Then, the regularized vector field $\eta^\e = \rho_\e\ast \eta$ is globally Lipschitz and satisfies (i)--(iii) of Definition~\ref{defin:rigidity} up to an $\e$-translation in the variable $x_n$.
\end{lem}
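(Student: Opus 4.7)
My plan is to exploit the standard properties of mollification together with Jensen's inequality, which is precisely where the convexity of $\phi$ enters.

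First, since $\eta \in L^\infty(\R^n;\R^n)$, the mollification $\eta^\e = \rho_\e \ast \eta$ is automatically in $C^\infty(\R^n;\R^n)$ with
\[
\|\nabla \eta^\e\|_\infty \le \|\eta\|_\infty \|\nabla \rho_\e\|_{L^1},
\]
hence $\eta^\e$ is globally Lipschitz. Moreover, mollification commutes with the distributional divergence, so $\div \eta^\e = \rho_\e \ast (\div \eta) = 0$ on $\R^n$, giving (ii). For the support condition, since $\spt \rho_\e \subset B_\e(0)$, the hypothesis (i) implies that $\eta^\e(x)=0$ whenever $x_n < -\e$; then the translated field $\widetilde \eta^\e(x) := \eta^\e(x - \e e_n)$ vanishes on $\{x_n < 0\}$, which is exactly (i) for $\widetilde \eta^\e$. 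Since translation preserves both the vanishing divergence and the pointwise inequality we are about to prove, it suffices to verify (iii) for $\eta^\e$ itself.

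The heart of the proof is condition (iii). I first note that $\phi$ is non-decreasing on $[0,+\infty)$: for $0<b<a$, writing $b = (1 - b/a)\cdot 0 + (b/a)\cdot a$ and using convexity with $\phi(0)=0$ yields $\phi(b) \le (b/a)\phi(a) < \phi(a)$. Then, for any $x \in \R^n$, I estimate using in order the triangle inequality for the vector-valued integral, the monotonicity of $\phi$, Jensen's inequality (valid because $\rho_\e(x-y)\, dy$ is a probability measure on $\R^n$), the pointwise hypothesis (iii) for $\eta$, and finally the definition of $\eta^\e_n$:
\[
\phi(|\eta^\e(x)|) \le \phi\!\left(\int \rho_\e(x-y)\, |\eta(y)|\, dy\right) \le \int \rho_\e(x-y)\, \phi(|\eta(y)|)\, dy \le \int \rho_\e(x-y)\, \eta_n(y)\, dy = \eta^\e_n(x).
\]
This chain gives (iii) for $\eta^\e$ everywhere, and the same inequality transfers to $\widetilde \eta^\e$ after the vertical translation.

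I do not anticipate a genuine obstacle here: the only subtle point is ensuring that Jensen can be applied componentwise to control $|\eta^\e|$ from above by the convolution of $|\eta|$, and that $\phi$ is monotone so that this upper bound can be pushed through $\phi$. Both follow immediately from the hypotheses on $\phi$. Everything else is a routine verification that mollification preserves the three conditions in Definition~\ref{defin:rigidity}, modulo the harmless $\e$-translation needed to restore the vanishing on the lower half-space.
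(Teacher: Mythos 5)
Your proof is correct and follows essentially the same route as the paper: mollification preserves the vanishing divergence and the lower half-space support up to an $\e$-translation, and Jensen's inequality yields $\phi(|\eta^\e|)\le \eta^\e_n$. The only cosmetic difference is that you decompose the Jensen step into triangle inequality plus monotonicity of $\phi$ plus scalar Jensen, whereas the paper applies Jensen directly to the convex map $v\mapsto\phi(|v|)$; your version just makes explicit why that map is convex.
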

\begin{proof}
We note that $\eta_n \ge \phi(|\eta|)$ almost everywhere on $\R^n$ by (iii), therefore Jensen's inequality implies that for all $x\in \R^n$ and $\e>0$
\begin{align*}
\phi(|\eta^\e(x)|) &= \phi\left(\left| \int_{B_\e} \eta(y)\rho_\e(x-y)\,dy\right|\right) \le \int_{B_\e} \phi(|\eta(y)|) \rho_\e(x-y)\,dy\nonumber\\
&\le \int_{B_\e} \eta_n(y) \rho_\e(x-y)\, dy =  \eta^\e_{n}(x)\,.\label{eq:controllosueta}
\end{align*}
Then, we observe that $\eta^\e$ is globally Lipschitz, as a consequence of the boundedness of $\eta$, and verifies $\div \eta^\e = 0$ on $\R^n$, that is, (ii). Moreover, for every $x = (y,t)$ such that $t\le -\e$ one has $\eta^\e(x) = 0$. Finally, up to a translation in the variable $x$, we can assume $\eta^\e(x) = 0$ for all $x\in \Rnm$, hence property (i) is also satisfied. 
\end{proof}

\begin{proof}[Proof of Theorem~\ref{thm:rigidity1}]
Without loss of generality, by Lemma~\ref{lem:mollifica} we can assume that $\eta$ is smooth and globally Lipschitz. Let us fix $\e>0$ and consider the one-parameter flow associated with the vector field $X=\eta + \e e_n$, defined for every $t\in \R$ and $p\in \R^n$ by the Cauchy problem:
\begin{equation*}
\begin{cases}
\Phi(0,p) = p\,,\\
\frac{\de}{\de t} \Phi(t,p) = X(\Phi(t,p))\,.
\end{cases}
\end{equation*}
Note that in the notation above we dropped the dependence upon the parameter $\e$ for the sake of simplicity. Due to the smooth dependence from the initial datum, the map $\Phi(\cdot, \cdot):\R\times \R^n \to \R^n$ is smooth, $D_{p}{\Phi}_{|{t=0}} = \text{Id}$ and the map $\Phi(t, \cdot) : \R^n \to \R^n$ is a diffeomorphism for every $t\in \R$. Let us denote by $\Phi_n(t, p)$ the $n$-th component of $\Phi(t, p)$. 
Since $X_n(p) \ge \e$ for all $p\in \R^n$, we have that $\frac{\de}{\de t}\Phi_n(t, p) \ge \e$, hence the function $t\mapsto \Phi_n(t, p)$ is strictly monotone and surjective. Therefore, for every $h\in \R$ and $p\in \R^n$ there exists a unique $T=T(p,h)\in \R$ such that $\Phi_n(T, p) = h$. By the Implicit Function Theorem, $T(p,h)$ is smooth and one has 
\[
\de_h T(p,h) = X_n(\Phi(T, p))^{-1},\qquad D_p T(p,h) = -X_n(\Phi(T, p))^{-1} D_p \Phi_n(T, p)\,.
\]
Fix now an open, bounded and smooth set $A \subset \R^{n-1}$ and $h_0>0$. Let us set $A_{h_0}=A\times\{h_0\}$ and define the map 
\[
\Psi:A\times (0,h_0)\to \R^n,\qquad \Psi(q,h) = \Phi(T(p,h),p)\,,
\]
where we have set $p=(q,h_0)$. Before proceeding it is convenient to introduce some more notation. We write 
\begin{align*}
\Psi &= (\Psi^1,\dots,\Psi^{n-1},\Psi^n)=(\hat \Psi,\Psi^n)\\
X &= (X_1,\dots,X_{n-1},X_n) = (\hat X,X_n)\\
y &= (y_1,\dots,y_{n-1},y_n) = (\hat y, y_n)\qquad \text{for }y\in \R^n\,.
\end{align*}
We start by computing the partial derivative of $\Psi$ with respect to $h$:
\[
\de_h \Psi(q,h) = X_n(\Psi)^{-1} X(\Psi)\,.
\]
Note that $\Psi(p)=\Psi(q,h_0)=p$ and $\Psi^n(q,h)=h$ by definition. Owing to the smoothness of $\Psi$, we first compute the partial derivative with respect to $h$ of $\Psi^i_j := \de_{q_j}\Psi^i$, for $i,j=1,\dots,n-1$:
\begin{align*}
\de_h \Psi^i_j &= \de_{q_j} \de_h \Psi^i = \de_{q_j}[X_n(\Psi)^{-1} X_i(\Psi)]\\
&= X_n(\Psi)^{-1}\sum_{r=1}^n \de_{y_r}X_i(\Psi) \Psi^r_j - X_n(\Psi)^{-2}X_i(\Psi) \sum_{r=1}^n \de_{y_r} X_n(\Psi) \Psi^r_j\\
&= \sum_{r=1}^n \left[X_n(\Psi)^{-1}\de_{y_r}X_i(\Psi) - X_n(\Psi)^{-2}X_i(\Psi) \de_{y_r} X_n(\Psi)\right] \Psi^r_j\,.
\end{align*}
Moreover, it is immediate to check that $\de_{q_j} \Psi^n(q,h) = 0$ for all $j=1,\dots,n-1$ and that $\de_h \Psi^n(q,h)=1$, so that in particular the matrix form of the previous computation is
\[
\de_h D_q\hat{\Psi} = B(\Psi) \cdot D_q\hat{\Psi}\,,
\]
where 
\[
B= X_n^{-1}\, D_{\hat{y}}\hat{X} - X_n^{-2}\, \hat{X}\otimes D_{\hat{y}} X_n\,.
\]
Moreover, the determinant of $D\Psi$ coincides with that of $D_q\hat{\Psi}$. If we assume that $q\in \R^{n-1}$ is fixed, and define $\delta(h) = \det D_q\hat{\Psi}(q,h)$, we find by standard calculations that 
\[
\delta^\prime(h) = \mathop{\mathrm{tr}}\,[B(\Psi(q,h))]\, \delta(h)\,,
\]
with initial condition $\delta(h_0) = 1$. This shows that the Jacobian matrix of $\Psi$ is uniformly invertible on compact subsets of $\R^{n}$. Consequently, $\Psi$ is a smooth diffeomorphism and the set $F_\e(A) := \Psi(A\times (0,h_0))$, depicted in Figure~\ref{fig:flusso_cauchy}, is Lipschitz. Notice moreover that 
\[
|\Psi(q,h_0) - \Psi(q,0)| \le \int_0^{h_0}|\de_h \Psi(q,h)|\, dh \le \int_0^{h_0}X_n^{-1}|X|\, dh \le h_0 (c|\eta|+\e)^{-1}(|\eta|+\e) \le \max\left\{h_0, \frac{h_0}{c}\right\}\,,
\]
thanks to the properties of $\eta$. 
\begin{figure}[t]
 \includegraphics[scale=0.75]{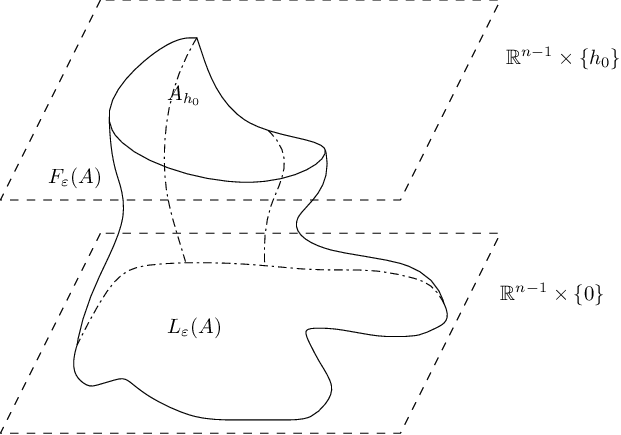}
  \caption{\label{fig:flusso_cauchy} The evolution of $A_{h_0}$ through the diffeomorphism $\Psi$ defines a  ``flow-tube'' .}
  \end{figure}
Notice now that given a bounded $A\subset \R^{n-1}$, there exists a constant $R>0$ depending only on $A$ and $h_0$, such that $F_\e(A)$ is contained in $(-R,R)^n$ for every $\e>0$. Setting $L_\e(A) = \Psi(A\times \{0\})$, by applying the Divergence Theorem on $F_\e(A)$ to the vector field $X = \eta + \e e_n$ we find the identity
\[
0 = \int_{F_\e(A)} \div X = \int_A (\eta_n(q,h_0) + \e)\, dq - \e\H^{n-1}(L_\e(A))\,,
\]
since the integral on the ``lateral'' boundary of $F_\e(A)$ vanishes. This happens because the lateral boundary of the flow-tube consists of integral curves of the flow $X$, and thus on this lateral boundary one has $X\cdot \nu_{F_\e(A)}\equiv 0$. Using the fact that $\H^{n-1}(L_\e(A)) \le (2R)^{n-1}$ we can pass to the limit as $\e\to 0$ in the identity above, obtaining that $\eta_n$ vanishes on $A\times \{h_0\}$. By the arbitrary choice of both $A$ and $h_0$, and by property (iii) of Definition~\ref{defin:rigidity}, we conclude that $\eta=0$ on $\R^n$.
\end{proof}

We now proceed with the proof of Theorem~\ref{thm:rigidity2}. Here, instead of using the ``flow-tube'' method employed in the proof of Theorem~\ref{thm:rigidity1}, we take advantage of a special feature of $2$-dimensional cylinders of the form $(-a,a)\times(0,h_0)$, i.e.~the fact that their ``lateral'' perimeter is constantly equal to $2h_0$ (and in particular it does not blow up when $a\to +\infty$).
\begin{proof}[Proof of Theorem~\ref{thm:rigidity2}]
By Lemma~\ref{lem:mollifica} we can additionally assume that $\eta$ is smooth and globally Lipschitz. Since $\eta_2 \ge 0$, by the Divergence Theorem we find
\begin{equation}\label{eq:dominated}
\int_{-r}^r |\eta_2(x_1,t)|\, dx_1 = \int_{-r}^r \eta_2(x_1,t)\, dx_1 = \int_0^t \eta_1(-r, x_2) - \eta_1(r, x_2)\, dx_2\, \le 2t\|\eta\|_\infty\,.
\end{equation}
Therefore, $\eta_2(\cdot, t) \in L^1(\R;\R)$ for all $t>0$ and
\begin{equation}\label{eq:normz2}
\|\eta_2(\cdot, t)\|_1 \le 2t\|\eta\|_\infty\,.
\end{equation}
By combining (iii) of Definition~\ref{defin:rigidity} with~\eqref{eq:normz2} and the fact that $\eta_{2}$ is Lipschitz, we infer that 
\begin{equation}\label{eq:ubonz1}
\phi\Big(|\eta_1(x_1, t)|\Big) \le \eta_2(x_1, t) \to 0\qquad \text{as $x_1\to \pm\infty$,}
\end{equation}
hence, owing to the properties of $\phi$, we get for all $t>0$
\begin{equation}\label{eq:limitz2e}
\lim_{x_1\to\pm \infty} \eta_1(x_1, t) = 0\,.
\end{equation}
This implies that 
\[
2\|\eta\|_\infty \ge \eta_1(-r,t) - \eta_1(r,t) \to 0
\] 
as $r \to +\infty$, for all $t>0$. Therefore, we can take the limit in~\eqref{eq:dominated} as $r\to \infty$ and, thanks to the Dominated Convergence Theorem, we obtain that
\[
\lim_{r\to +\infty} \int_{-r}^r |\eta_2(x_1, t)|\, dx_1 = \lim_{r\to +\infty}  \int_0^t \eta_1(-r, x_2) - \eta_1(r, x_2)\, dx_2 =  \int_0^t \lim_{r\to +\infty} (\eta_1(-r, x_2) - \eta_1(r, x_2))\, dx_2 = 0\,.
\]
Hence, the $L^1$-norm of $\eta_2(\cdot, t)$ is zero, thus $\eta_2(\cdot, t)=0$, for all $t>0$. By~\eqref{eq:ubonz1} and the properties of $\phi$ we get $\eta=0$ on $\R^2$.
\end{proof}


\section{Counterexamples to quadratic rigidity in dimension \texorpdfstring{$n\geq 4$}{}}
\label{sec:counterexs}

Given $x\in \R^n$ we set $r = (x_1,\dots,x_{n-1})$ and $z = x_n$.  We consider vector fields $\eta\in C^0(\R^n)$ such that for $r\neq 0$ one has
\begin{equation}\label{eq:etaspecial}
\eta(r,z) = \big(r f(r,z),\ h(r,z) \big)\,,
\end{equation}
with $f,h\in C^1(\R^n_+ \setminus \{(0,z):\ z\in \R\})$. We have the following 
\begin{prop}\label{prop:equivalence}
	Let $\eta\in C^0(\R^n)$ be as in \eqref{eq:etaspecial}. Define
	\begin{equation}\label{eq:Vfrometa}
	V(r,z):= -|r|^{n-1}  \int_0^z f(r,s)\, ds\,.
	\end{equation}
	Then, $\eta$ satisfies 
	\begin{align}\label{eq:etazero}
	&\eta(r,z) = 0\qquad \text{for all $z\le 0$,}\\\label{eq:etaminoreuno}
	&|\eta(x)|\le 1\qquad \text{for all }x\in \R^n,\\\label{eq:divetazero}
	&\div \eta = 0 \qquad \text{on $\R^n$},\\\label{eq:t2rigidity}
	&\eta_n(x) \ge c_1|\eta(x)|^2\qquad \text{for all $x\in \R^n$,}
	\end{align}
	if and only if $V(r,z)$ satisfies
	\begin{align}\label{eq:V0}
	&V(r,z)=0 \qquad\text{for all $z\le 0$},\\\label{eq:V1}
	&|\nabla V(r,z)|\le |r|^{n-2} \qquad\text{for all $r>0$ and $z\in \R$},\\\label{eq:V2}
	& r\cdot \nabla_r V(r,z) \ge c_2 |r|^{3-n}\big(\de_z V(r,z)\big)^2, \qquad\text{for all $r>0$ and $z\in \R$}\,.
	\end{align}
	Moreover one has 
	\begin{equation}\label{eq:etafromV}
	\eta(r,z) = |r|^{1-n} \Big(- (\de_z V) r,r\cdot \nabla_r V\Big)
	\end{equation}
\end{prop}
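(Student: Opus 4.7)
The plan is to verify the proposition by a direct calculation. First I would establish the explicit formula \eqref{eq:etafromV}, and then translate each of the four conditions on $\eta$ into the corresponding condition on $V$ using it.

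\textbf{Step 1: deriving \eqref{eq:etafromV}.} Differentiating \eqref{eq:Vfrometa} in $z$ gives $\de_z V = -|r|^{n-1} f$, so $f = -|r|^{1-n}\de_z V$, and the first $n{-}1$ components of $\eta$ read $rf = -|r|^{1-n}(\de_z V)\,r$, which matches the first entry of \eqref{eq:etafromV}. For the $n$-th component, I expand the divergence in the $(r,z)$-split:
\[
\div\eta = (n-1)f + r\cdot\nabla_r f + \de_z h,
\]
so under \eqref{eq:divetazero} one obtains $\de_z h = -(n-1)f - r\cdot\nabla_r f$. Condition \eqref{eq:etazero} forces $h(r,0)=0$ by continuity, so integrating in $z$ yields
\[
h(r,z) = -(n-1)\int_0^z f(r,s)\,ds - \int_0^z r\cdot\nabla_r f(r,s)\,ds.
\]
On the other hand, differentiating \eqref{eq:Vfrometa} in $r$ and contracting with $r$,
\[
r\cdot\nabla_r V = -(n-1)|r|^{n-1}\int_0^z f(r,s)\,ds - |r|^{n-1}\int_0^z r\cdot\nabla_r f(r,s)\,ds = |r|^{n-1}h(r,z),
\]
so $h = |r|^{1-n}(r\cdot\nabla_r V)$, completing \eqref{eq:etafromV}.

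\textbf{Step 2: the four equivalences.} Once \eqref{eq:etafromV} is available, the translations proceed termwise. First, \eqref{eq:etazero} $\Leftrightarrow$ \eqref{eq:V0} is immediate from the integral definition of $V$. Next, a direct expansion gives
\[
|\eta|^2 = |r|^2 f^2 + h^2 = |r|^{4-2n}(\de_z V)^2 + |r|^{2-2n}(r\cdot\nabla_r V)^2;
\]
under the cylindrical symmetry implicit in the ansatz (so that $\nabla_r V$ is parallel to $r$), one has $(r\cdot\nabla_r V)^2 = |r|^2|\nabla_r V|^2$, and hence $|\eta|^2 = |r|^{4-2n}|\nabla V|^2$, whence \eqref{eq:etaminoreuno} $\Leftrightarrow$ \eqref{eq:V1}. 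Finally, substituting \eqref{eq:etafromV} into $\eta_n \ge c_1|\eta|^2$ and multiplying by $|r|^{n-1}$ transforms it into
\[
r\cdot\nabla_r V \ge c_1|r|^{3-n}(\de_z V)^2 + c_1|r|^{1-n}(r\cdot\nabla_r V)^2.
\]
The implication $(\Rightarrow)$ follows by discarding the nonnegative quadratic term, giving \eqref{eq:V2} with $c_2=c_1$. For $(\Leftarrow)$, I use $|\eta|\le 1$ (so $h \le 1$, i.e.\ $|r|^{1-n}(r\cdot\nabla_r V)\le 1$) to bound $|r|^{1-n}(r\cdot\nabla_r V)^2 \le r\cdot\nabla_r V$, absorb this into the left-hand side, and recover \eqref{eq:t2rigidity} with a new constant $c_1 \le c_2/(1+c_2)$.

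\textbf{Main obstacle.} The most delicate point is the last equivalence, where $c_1$ and $c_2$ genuinely differ and one has to use the bound \eqref{eq:V1} together with the identity in Step 1 to absorb the quadratic remainder $|r|^{1-n}(r\cdot\nabla_r V)^2$. The step \eqref{eq:etaminoreuno} $\Leftrightarrow$ \eqref{eq:V1} also tacitly relies on the cylindrical symmetry of the ansatz to tighten the Cauchy--Schwarz inequality into an equality; this is natural in view of the cylindrically symmetric counterexamples being built in this section.
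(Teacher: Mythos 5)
Your proposal is correct and follows essentially the same route as the paper: a direct computation expressing $f$ and $h$ through $\de_z V$ and $r\cdot\nabla_r V$, followed by a termwise translation of the four conditions, including the same absorption of the $h^2$ term via $0\le h\le 1$ to pass between the constants $c_1$ and $c_2$. The Cauchy--Schwarz point you flag in the step \eqref{eq:etaminoreuno} $\Leftrightarrow$ \eqref{eq:V1} (equality only when $\nabla_r V$ is parallel to $r$) is in fact glossed over in the paper's own proof, and is harmless since the implication needed for the counterexample, from $V$ to $\eta$, goes in the direction where the inequality holds without symmetry.
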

\begin{proof}
	We show in full detail the only if part. Since
\[
	\div \eta (r,z) = \div_r (r f(r,z))+ \de_z h(r,z) 
	= (n-1)f + r\cdot \nabla_r f(r,z) + \de_z h(r,z)\,,
\]
	equation~\eqref{eq:divetazero} is equivalent to
	\begin{equation}\label{eq:eq1}
	\de_z h = -\left( r\cdot \nabla_r f + (n-1)f\right)\,.
	\end{equation}
	By recalling that $h(r,0) = 0$ for all $r$ by~\eqref{eq:etazero}, from~\eqref{eq:eq1} we obtain that
	\begin{equation}\label{eq:h-integrale}
	h(r,z)=-\int_0^z  \big( r\cdot \nabla_r f(r,s) + (n-1)f(r,s) \big)\, ds\,.
	\end{equation}
	Inequality~\eqref{eq:t2rigidity}, up to a change of the constant $c$, is equivalent to 
	\begin{equation}\label{eq:eq2}
	h \ge c|r|^2 f^2\,.
	\end{equation}
	Let us set $V$ as in~\eqref{eq:Vfrometa} and observe that~\eqref{eq:etazero} implies $V(r,z)=0$ when $z\le 0$. Then using~\eqref{eq:h-integrale} we obtain
	\[
	\nabla_{r}V = - |r|^{n-1}\int_0^z [\nabla_r f(r,s) + (n-1)f(r,s)|r|^{-2}r]\, ds
	\]
	and 
	\begin{equation}\label{gradV1}
	\de_z V(r,z) = -|r|^{n-1}f(r,z)\,,
	\end{equation}
	so that in particular 
	\begin{equation}\label{gradV2}
	r\cdot \nabla_r V = |r|^{n-1}h\,,
	\end{equation}
	and the inequality~\eqref{eq:eq2} implies
	\begin{equation*}
	r\cdot \nabla_r V \ge c|r|^{n+1}f^2 \ge c|r|^{3-n} (\de_z V)^2\,.  
	\end{equation*}
	Then by observing that~\eqref{eq:etaminoreuno} is equivalent to $|r|^2 f^2 + h^2\le 1$, we obtain by~\eqref{gradV1} and~\eqref{gradV2}
\[
	\label{cond1}
	|\nabla V| \le |r|^{n-2}\,. 
\]
	We have thus proved that defining $V$ as in~\eqref{eq:Vfrometa} we get~\eqref{eq:V0}--\eqref{eq:V2}.
	\par
	Conversely, one can easily check that given $V$ satisfying~\eqref{eq:V0}--\eqref{eq:V2}, the vector field $\eta$ defined as in~\eqref{eq:etafromV} satisfies~\eqref{eq:etazero}--\eqref{eq:t2rigidity}.
\end{proof}

\begin{thm}\label{thm:controesempio}
	The quadratic rigidity property does not hold in dimension $n\ge 4$.
\end{thm}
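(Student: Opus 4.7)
The strategy I would adopt is to invoke Proposition~\ref{prop:equivalence} and reduce the failure of quadratic rigidity to constructing, for each $n \ge 4$, a nontrivial function $V : \R^{n-1} \times \R \to [0, \infty)$ satisfying \eqref{eq:V0}--\eqref{eq:V2}. I plan to use a cylindrically symmetric, separable ansatz
\[
V(r, z) = V_0(|r|) \, m(z), \qquad \rho := |r|,
\]
and to choose $V_0 \ge 0$ nondecreasing with $V_0(\rho) = 0$ for $\rho$ small and $V_0'(\rho) \propto \rho^{2-n}$ for $\rho$ large (so that $V_0$ stays bounded as $\rho \to \infty$), together with $m \ge 0$ that vanishes for $z \le 0$, is $C^1$ with $m'(0) = 0$, and for which the ratio $(m')^2/m$ is bounded. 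This last property is the key to controlling the ``bad'' term $(\partial_z V)^2 = V_0^2 (m')^2$ in \eqref{eq:V2} near $z = 0^+$, where $m$ itself vanishes.

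A concrete realization I would take is $m(z) = z^2/(1+z^2)$ for $z > 0$, extended by $0$ to $z \le 0$, for which
\[
\frac{(m'(z))^2}{m(z)} = \frac{4}{(1+z^2)^3} \le 4 \qquad \text{on } (0, \infty),
\]
together with a (smoothed) $V_0$ given essentially by $V_0'(\rho) = D \rho^{2-n}$ for $\rho \ge 1$, $V_0 \equiv 0$ for $\rho \le 1/2$, where $D > 0$ is a small parameter. The dimensional assumption $n \ge 4$ enters here because
\[
\int_1^\infty \sigma^{2-n}\, d\sigma = \frac{1}{n-3} < \infty,
\]
so $V_0$ is uniformly bounded by some $B = O(D)$.

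Verification of \eqref{eq:V0}--\eqref{eq:V2} is then a direct algebraic check. Condition \eqref{eq:V0} is immediate. For \eqref{eq:V1}, on the support $\{V \ne 0\}$ one has $\rho \ge 1/2$, so $\rho^{2(n-2)}$ is bounded below, while $|\nabla V|^2 = (V_0' m)^2 + (V_0 m')^2 = O(D^2)$; this fits under $\rho^{2(n-2)}$ provided $D$ is taken small. For \eqref{eq:V2}, computing $r \cdot \nabla_r V = \rho V_0' m$ and $\partial_z V = V_0 m'$ and cancelling a common factor $\rho^{3-n}$ on $\{\rho \ge 1\}$, the inequality reduces to the pointwise bound $D m \ge c_2 V_0^2 (m')^2$, i.e.
\[
D \ge c_2 V_0(\rho)^2 \frac{(m'(z))^2}{m(z)}.
\]
Using $V_0 \le B$ and $(m')^2/m \le 4$, this becomes $D \ge 4 c_2 B^2$, which by $B = O(D/(n-3))$ is satisfied once $D \le (n-3)^2/(4c_2)$ (up to lower-order corrections from the transition region $\rho \in [1/2, 1]$, which is handled analogously).

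The main obstacle---and where $n \ge 4$ is used in an essential way---is satisfying \eqref{eq:V2} \emph{and} the $L^\infty$ bound \eqref{eq:V1} simultaneously. Any attempt to let $V_0$ grow at infinity (as a positive power of $\rho$, for instance) immediately makes $V_0^2 (m')^2$ unbounded and breaks \eqref{eq:V2}, so $V_0$ must be bounded, which forces $V_0' = O(\rho^{2-n})$ at infinity, whose integrability holds precisely when $n > 3$. For $n = 3$ the analogous ansatz produces $V_0 \sim \log \rho$ and \eqref{eq:V2} fails at large $\rho$, in perfect agreement with Proposition~\ref{prop:n=3} ruling out cylindrically symmetric counterexamples in $\R^3$.
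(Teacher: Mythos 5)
Your proposal is correct and follows essentially the same route as the paper: reduce via Proposition~\ref{prop:equivalence} to constructing a cylindrically symmetric, separable $V(r,z)=V_0(|r|)\,m(z)$ satisfying \eqref{eq:V0}--\eqref{eq:V2}, with a small multiplicative parameter absorbing the constants, a $z$-profile that is $C^1$ at $0$ with $(m')^2/m$ bounded, and the hypothesis $n\ge 4$ entering through the large-$\rho$ behaviour of the radial factor. The only difference is the explicit profile: the paper takes $V=\gamma\,[(1+|r|^{n-1})^{1/(n-1)}-1]\arctan(z^2)$, whose radial factor grows linearly and where $n\ge 4$ is used to bound a term of order $\rho^{4-n}$, whereas you take a bounded $V_0$ with $V_0'\propto\rho^{2-n}$ and invoke integrability of $\rho^{2-n}$ at infinity; both hinge on the same scaling obstruction, consistent with Proposition~\ref{prop:n=3}.
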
	
\begin{proof}
	Let us consider a positive parameter $\gamma$ (to be chosen later) and define the function
	\[
	V(r,z) = \begin{cases}
	\gamma [(1+|r|^{n-1})^{\frac{1}{n-1}}-1]\arctan(z^2) & \text{if }z\ge 0\,,\\
	0 & \text{otherwise}\,.
	\end{cases}
	\]
	Our aim is to verify properties~\eqref{eq:V0}--\eqref{eq:V2} up to a suitable choice of $\gamma$, and then to use the equivalence stated in Proposition~\ref{prop:equivalence}. Of course~\eqref{eq:V0} is true by definition of $V$. 
	Let us set $\rho = |r|$ and write $V=V(\rho,z)$ for simplicity. One has
	\[
	\de_\rho V = \gamma \arctan(z^2) (1+\rho^{n-1})^{\frac{2-n}{n-1}} \rho^{n-2}
	\]
	and
	\[
	\de_z V = 2\gamma [(1+\rho^{n-1})^\frac{1}{n-1} -1] \frac{z}{1+z^4}\,.
	\]
	Notice that $V$ is of class $C^1$ and $V(\rho,z) = 0$ for all $\rho>0$ and $z\le 0$. We obtain
	\begin{align}
	|\nabla V| &\nonumber\le \gamma \arctan(z^2) \big(1+\rho^{n-1}\big)^{\frac{2-n}{n-1}} \rho^{n-2} + 2\gamma \Big[\big(1+\rho^{n-1}\big)^{\frac{1}{n-1}} -1\Big] \frac{z}{1+z^4}\\\label{stimagradV1}
	&\le \gamma\frac{\pi\rho^{n-2}}{2} + \gamma\frac{3^{3/4}}{2}\Big[\big(1+\rho^{n-1}\big)^{\frac{1}{n-1}} -1\Big]\,, 
	\end{align}
	where the second inequality follows from the maximization of the function $\frac{z}{(1+z^4)}$ for $z\in [0,+\infty)$.
	Let us consider the function
	\[
	\phi(\rho) = \frac{(1+\rho^{n-1})^\frac{1}{n-1} -1}{\rho^{n-2}}\,.
	\]
	As $\rho\to 0^+$ we have $\phi(\rho) \simeq \rho/(n-1)$, while as $\rho\to +\infty$ we have $\phi(\rho) \simeq \rho^{3 -n}$. Moreover, when $0<t<1$, one has $(1+t)^\frac{1}{n-1}\le 1+\frac{t}{n-1}$, hence we deduce that 
	\[
	\phi(\rho) \le \frac{\rho}{n-1} \le 1
	\]
	when $\rho <1$, and  
	\[
	\phi(\rho) \le \rho^{3 -n} \le 1
	\] 
	when $\rho\ge 1$. Therefore by~\eqref{stimagradV1} and the last inequalities we find that
	\begin{equation}
	|\nabla V| \le C\gamma \rho^{n-2}\,,
	\end{equation}
	where 
	\[
	C = \frac{\pi+3^{3/4}}{2}\,.
	\]
	Assuming $\gamma \le C^{-1}$ we obtain~\eqref{eq:V1}.
	
	We now show that~\eqref{eq:V2} (with the constant $c=1$) holds up to taking a smaller $\gamma$. Indeed, the relation $\rho \de_\rho V \ge \rho^{3-n} (\de_z V)^2$, after separation of variables, becomes
	\[
	\frac{\arctan(z^2)(1+z^4)^2}{z^2}  \ge 4\gamma \phi(\rho)^2(1+\rho^{n-1})^{\frac{n-2}{n-1}}\,.
	\]
	We argue as for the upper bound of $\phi(\rho)$ (more precisely, we discuss the two cases $\rho<1$ and $\rho\ge 1$; in the first case we use the bound $\phi(\rho)\le 1$, while in the second case we use the fact that $n\ge 4$, and the inequalities $\phi(\rho) \le \rho^{3-n}$ and $1+\rho^{n-1}\le 2\rho^{n-1}$), and find
	\begin{equation}\label{eq:stimadim4}
	4\gamma \phi(\rho)^2(1+\rho^{n-1})^{\frac{n-2}{n-1}}\le 2^{\frac{3n-4}{n-1}} \gamma\,.
	\end{equation}
	At the same time, by easy calculations we infer that the function $\frac{\arctan(t) (1+t^2)^2}{t}$ is bounded from below by $1$. Hence~\eqref{eq:V2} is implied by the condition
	\[
	\gamma \le 2^{\frac{4-3n}{n-1}}\,.
	\]
	In conclusion, by taking $\gamma$ small enough, and thanks to Proposition~\ref{prop:equivalence}, a divergence-free vector field providing a counterexample to the rigidity property in dimension $n\ge 4$ is given by
	\[
	\eta(r,z) = \begin{cases}
	\gamma|r|^{1-n}\left(-2[(1+|r|^{n-1})^\frac{1}{n-1} -1] \frac{z}{1+z^4}\, r,\  \arctan(z^2) (1+|r|^{n-1})^{\frac{2-n}{n-1}} |r|^{n-1}\right) & \text{if }z>0,\\
	0& \text{if }z\le 0\,.
	\end{cases}
	\]
	We remark that the vector field $\eta$ is of class $C^0$, however one can obtain a $C^\infty$ counterexample by mollification of $\eta$.
\end{proof}

Concerning the $3$-dimensional case, the construction of a counterexample with cylindrical symmetry, as done in Theorem~\ref{thm:controesempio}, does not work. Indeed we are unable to get an estimate like~\eqref{eq:stimadim4}, as the function $\phi(\rho)$ tends to $1$ as $\rho\to+\infty$. More precisely we can prove the following result.
\begin{prop}\label{prop:n=3}
	Let $n=3$ and assume that $V(r,z)$ is a $C^1$ function satisfying properties~\eqref{eq:V0}--\eqref{eq:V2}. Then, $V(r,z) = \psi_1(|r|) \psi_2(z)$ for suitable functions $\psi_1,\psi_2$ implies $V\equiv 0$.
\end{prop}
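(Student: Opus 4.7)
My plan is to reduce the statement to ODE inequalities for $\psi_1$ and $\psi_2$ and then run a logarithmic-divergence (Gronwall-type) argument.

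Write $\rho = |r|$ and compute
\begin{align*}
|\nabla V|^2 &= \psi_1'(\rho)^2\psi_2(z)^2 + \psi_1(\rho)^2\psi_2'(z)^2, \\
r\cdot\nabla_r V &= \rho\, \psi_1'(\rho)\, \psi_2(z),
\end{align*}
so that for $n=3$ conditions \eqref{eq:V1}--\eqref{eq:V2} become
\[
\psi_1'(\rho)^2\psi_2(z)^2 + \psi_1(\rho)^2\psi_2'(z)^2 \le \rho^2, \qquad \rho\, \psi_1'(\rho)\,\psi_2(z) \ge c_2\, \psi_1(\rho)^2\, \psi_2'(z)^2.
\]
The ambiguity $(\psi_1,\psi_2)\mapsto(-\psi_1,-\psi_2)$ preserves $V$ and all conditions, but $V\mapsto -V$ does not preserve \eqref{eq:V2}, so signs must be tracked carefully.

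From \eqref{eq:V2} one reads off $\psi_1'(\rho)\psi_2(z)\ge 0$ for every $\rho>0$ and $z\in\R$. If $\psi_2$ took both signs this would force $\psi_1'\equiv 0$, and combined with the asymptotic $\psi_1(0^+)=0$ derived momentarily it would yield $\psi_1\equiv 0$. So $\psi_2$ has a constant sign; I treat the case $\psi_2\ge 0$ first. Choose $z_0$ with $\psi_2(z_0)>0$; then $\psi_1'\ge 0$ on $(0,\infty)$, so $\psi_1$ is nondecreasing. Letting $\rho\to 0^+$ in \eqref{eq:V1} forces $\psi_1(0^+)^2\psi_2'(z)^2=0$ for every $z$, hence $\psi_1(0^+)=0$ (the alternative $\psi_2'\equiv 0$ gives $\psi_2\equiv 0$ by \eqref{eq:V0} and continuity). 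Combined with monotonicity this shows $\psi_1\ge 0$ and $\{\psi_1>0\}=(\rho_0,\infty)$ for some $\rho_0\ge 0$.

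The substantive step is the logarithmic divergence. Wherever $\psi_1(\rho)>0$ and $\psi_2(z)>0$, \eqref{eq:V2} rearranges into the separated form
\[
\frac{\rho\, \psi_1'(\rho)}{\psi_1(\rho)^2} \ge c_2\, \frac{\psi_2'(z)^2}{\psi_2(z)}.
\]
Setting $K:=c_2\sup_{\psi_2>0}\psi_2'(z)^2/\psi_2(z)\in[0,+\infty]$ and using the identity $\psi_1'/\psi_1^2=-(1/\psi_1)'$, integration in $\rho$ over $[\rho_1,\rho_2]\subset(\rho_0,\infty)$ yields
\[
\frac{1}{\psi_1(\rho_1)}-\frac{1}{\psi_1(\rho_2)} \ge K\log(\rho_2/\rho_1),
\]
whose left-hand side is bounded by $1/\psi_1(\rho_1)$ while the right-hand side diverges as $\rho_2\to+\infty$ unless $K=0$. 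Therefore $\psi_2'\equiv 0$ on the open set $\{\psi_2>0\}$; on each of its connected components $\psi_2$ would then be a positive constant, but continuity of $\psi_2$ together with $\psi_2=0$ on $(-\infty,0]$ forces this set to be empty, so $\psi_2\equiv 0$, contradicting the running assumption. The symmetric case $\psi_2\le 0$ is handled analogously: now $\psi_1'\le 0$ and $\psi_1(0^+)=0$ force $\psi_1\le 0$, and dividing \eqref{eq:V2} through by the positive quantity $\psi_1(\rho)^2|\psi_2(z)|$ flips the inequality and produces the same logarithmic divergence with $1/\psi_1$ replaced by $-1/\psi_1$. The main obstacle I anticipate is precisely the sign bookkeeping forced by the asymmetry of \eqref{eq:V2} under $V\mapsto -V$; once that is organised and the asymptotic $\psi_1(0^+)=0$ is extracted from \eqref{eq:V1}, the integral estimate above is the substantive content.
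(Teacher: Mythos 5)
Your proof is correct and follows essentially the same strategy as the paper's: after separating variables in \eqref{eq:V2}, both arguments integrate the resulting Riccati-type inequality $\rho\,\psi_1'(\rho)\ge\gamma\,\psi_1^2(\rho)$ to produce a logarithmic divergence $1/\psi_1(\rho_1)\ge\gamma\log(\rho_2/\rho_1)$ that is impossible as $\rho_2\to+\infty$ unless $\gamma=0$. The only difference is organizational: you first fix the signs globally ($\psi_2$ of constant sign, $\psi_1$ monotone with $\psi_1(0^+)=0$) and reduce to two cases, whereas the paper picks points $z_0,\rho_0$ and runs the same estimate in four sign cases.
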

\begin{proof}
	We set $\rho = |r|$ and write~\eqref{eq:V2} as
	\[
	\rho \psi_1'(\rho)\psi_2(z) \ge c \psi_1^2(\rho) [\psi_2'(z)]^2\,.
	\]
	Let us assume by contradiction that $\psi_1$ and $\psi_2$ are not trivial, hence there exist $z_0>0$ and $\rho_0>0$ such that $\psi_2(z_0)\neq 0$, $\psi_2'(z_0)\neq 0$, and $\psi_1(\rho_0)\neq 0$. Setting $a=\psi_2(z_0)$ and $b=\psi_2'(z_0)$, we have four cases according to the sign of $a$ and $\psi_1(\rho_0)$. 
	We discuss the first case $a>0$ and $\psi_1(\rho_0)>0$. We consider the differential inequality
	\[
	\rho\psi_1'(\rho) \ge \gamma \psi_1^2(\rho)
	\]
	with $\gamma = cb^2/a>0$. Therefore, the function $\psi_1$ is increasing and by separation of variables and integration between $\rho_0$ and $\rho>\rho_0$ we get
	\begin{equation}\label{eq:psiuno}
	\psi_1^{-1}(\rho_0) \ge -\psi_1^{-1}(\rho) + \psi_1^{-1}(\rho_0) \ge \gamma \log(\rho/\rho_0)
	\end{equation}
	We denote by $I=[\rho_0,\beta)$ the maximal right interval of existence of the solution $\psi_1$, for which $\psi_1>0$. We can exclude the case $\beta<+\infty$, as we would obtain by maximality that $\psi_1(\rho)\to +\infty$ as $\rho\to \beta^-$, however this would contradict the fact that $|\psi'(\rho)|\le \rho$ for all $\rho>0$. Contrarily, if $\rho\to+\infty$ one gets a contradiction with~\eqref{eq:psiuno}. The remaining three cases can be discussed in a similar way. 
\end{proof}

\begin{rem}
	As a consequence of Proposition~\ref{prop:n=3} we infer that in dimension $n=3$ no counterexample to the rigidity property can be found in the class of vector fields of the form $\eta(r,z) = \big(rf(|r|,z),h(|r|,z)\big)$.
\end{rem}


\section{The trace of a vector field with locally maximal normal trace}
\label{sec:traces}

The results we shall discuss in this section are stated for $\H^{n-1}$-almost-every point of $S$, being $S$ an oriented, $\H^{n-1}$-rectifiable set with locally finite $\H^{n-1}$-measure. Therefore, given $z\in \DM$, without loss of generality (see~\cite[Theorem 2.56]{AFP00}) we shall assume $x_0\in S$ to be such that 
\begin{itemize}
\item[(a)] the normal vector $\nu_S(x_0)$ is defined at $x_0$;
\item[(b)] $x_0$ is a Lebesgue point for the weak normal trace $[z\cdot \nu_S]$ of $z$ on $S$, with respect to the measure $\H^{n-1}\llcorner S$;
\item[(c)] $|\div z|(B_r(x_0)\setminus S) = o(r^{n-1})$ as $r\to 0^+$.
\end{itemize}

\begin{lem}\label{lem:divzero}
Let $\eta,\{z_k\}_k$ be vector fields in $\DM$ with $\sup_k \|z_k\|_\infty <+\infty$. Let $\Sigma,\{S_k\}_k$ be oriented, closed $\H^{n-1}$-rectifiable sets with locally finite $\H^{n-1}$-measure, satisfying the following properties:
\begin{itemize}
	\item[(i)] $z_k\to \eta$ in $L^\infty$-$w^*$;
	\item[(ii)] $\H^{n-1}{\llcorner S_k} \rightharpoonup \H^{n-1}{\llcorner \Sigma}$;
	\item[(iii)] $|\div z_k|\llcorner(\R^n\setminus S_k) \rightharpoonup 0$.
\end{itemize}  
Then, $\div \eta = 0$ in $\R^n\setminus \Sigma$.
\end{lem}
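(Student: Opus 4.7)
The idea is to test $\div \eta$ against an arbitrary $\psi \in C^\infty_c(\R^n \setminus \Sigma)$ and show that $\langle \div \eta, \psi \rangle = 0$. Since $\Sigma$ is closed, the compact set $K := \spt \psi$ has positive distance from $\Sigma$, which is what allows hypotheses (ii) and (iii) to kick in. The easy first step: because $\nabla \psi \in L^1(\R^n;\R^n)$ has compact support, hypothesis (i) gives
\[
\langle \div z_k, \psi \rangle = -\int_{\R^n} z_k \cdot \nabla \psi \, dx \; \longrightarrow \; -\int_{\R^n} \eta \cdot \nabla \psi \, dx = \langle \div \eta, \psi \rangle,
\]
so it suffices to show $\langle \div z_k, \psi \rangle \to 0$, which I would derive from the trivial bound
\[
|\langle \div z_k, \psi \rangle| \le \|\psi\|_\infty \bigl(|\div z_k|(K \setminus S_k) + |\div z_k|(K \cap S_k)\bigr).
\]

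The off-$S_k$ term is immediate from (iii): the nonnegative Radon measures $|\div z_k|\llcorner(\R^n \setminus S_k)$ converge weakly-$*$ to zero, and the standard upper-semicontinuity of mass on compact sets gives $|\div z_k|(K \setminus S_k) \to 0$. For the on-$S_k$ term, I would invoke the classical absolute-continuity estimate for $\DM^\infty$ vector fields, namely that for any $z\in\DM^\infty$ and any oriented $\H^{n-1}$-rectifiable set $S$ one has
\[
|\div z|\llcorner S \le 2\|z\|_\infty\, \H^{n-1}\llcorner S,
\]
which follows from the jump representation $\div z\llcorner S = ([z\cdot \nu_S]^+ - [z\cdot\nu_S]^-)\H^{n-1}\llcorner S$ together with the sup-norm bound on each one-sided weak normal trace. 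Setting $M := \sup_k \|z_k\|_\infty <\infty$ this gives $|\div z_k|(K\cap S_k) \le 2M\, \H^{n-1}(S_k\cap K)$.

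To force $\H^{n-1}(S_k \cap K) \to 0$ I would pick $\phi \in C_c(\R^n \setminus \Sigma)$ with $0\le \phi \le 1$ and $\phi \equiv 1$ on $K$; since $\spt \phi \cap \Sigma = \emptyset$, hypothesis (ii) yields
\[
\H^{n-1}(S_k\cap K) \le \int_{\R^n} \phi \, d\H^{n-1}\llcorner S_k \; \longrightarrow \; \int_{\R^n} \phi \, d\H^{n-1}\llcorner \Sigma = 0.
\]
Combining the three estimates, $\langle \div z_k, \psi\rangle \to 0$ and hence $\langle \div \eta, \psi\rangle = 0$; by arbitrariness of $\psi$ one concludes $\div \eta = 0$ in $\R^n \setminus \Sigma$.

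The only non-routine ingredient, and what I expect to be the main obstacle, is the quantitative bound $|\div z|\llcorner S \le 2\|z\|_\infty \H^{n-1}\llcorner S$. It is classical in the theory of divergence-measure fields (e.g.\ via the jump formulas linking one-sided weak normal traces to the singular part of $\div z$ on rectifiable sets), but if one prefers to avoid citing it, the same bound can be obtained on a ball $B\subset\subset \R^n\setminus \Sigma$ containing $K$ by applying Theorem~\ref{thm:GG} to $z_k$ on $B\cap \{x:\dist(x,S_k)>\rho\}$ and on $B\cap\{x:\dist(x,S_k)<\rho\}$, letting $\rho\downarrow 0$, and using the uniform $L^\infty$ bound on $\{z_k\}$.
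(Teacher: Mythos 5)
Your proposal is correct and follows essentially the same route as the paper: both split $\div z_k$ into its part on $S_k$, controlled via the jump representation $\div z_k\llcorner S_k = \bigl([z_k\cdot\nu_{S_k}]^+-[z_k\cdot\nu_{S_k}]^-\bigr)\H^{n-1}\llcorner S_k$ (hence by $2\|z_k\|_\infty\,\H^{n-1}\llcorner S_k$, which vanishes in the limit by (ii) since the test function is supported away from $\Sigma$), and its part off $S_k$, killed by (iii), while (i) identifies the limit with $\langle\div\eta,\psi\rangle$. The only difference is cosmetic (an auxiliary cutoff $\phi$ equal to $1$ on $\spt\psi$ instead of testing with $|\psi|$ directly), so no further comparison is needed.
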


\begin{proof}
Fix $\phi \in C^1_c(\R^n\setminus \Sigma)$ and set $\mu_k = \div z_k$ and $\mu = \div \eta$. By the Divergence Theorem coupled with the formula (see~\cite[Proposition 3.2]{ACM05})
\[
\mu_k \llcorner S_k = \Big([z_k\cdot \nu_{S_k}]^+ - [z_k\cdot \nu_{S_k}]^-\Big)\, \H^{n-1}\llcorner S_k\,,
\]
we have
\begin{align*}
\int_{\R^n\setminus S_k} \phi\, d\mu_k &= -\int_{S_k} \phi\, d\mu_k - \int_{\R^n\setminus S_k} \nabla \phi \cdot z_k \\
&= -\int_{S_k} \phi \Big([z_k\cdot \nu_{S_k}]^+ - [z_k\cdot \nu_{S_k}]^-\Big)\, d\H^{n-1} - \int_{\R^n} \nabla \phi \cdot z_k\,,
\end{align*}
hence by (ii) and (iii)
\[
\left|\int_{\R^n} \nabla \phi \cdot z_k \right| \le \int_{\R^n\setminus S_k}| \phi |\, d\mu_k + 2\|z_k\|_\infty \int |\phi|\, d\,\H^{n-1}\llcorner S_k \to 0\qquad \text{as }k\to\infty\,.
\]
This shows that
\[
\int_{\R^n} \phi\, d\mu = \lim_k\int_{\R^n} \phi\, d\mu_k = \lim_k \int_{\R^n} \nabla \phi \cdot z_k = 0\,,
\]
which proves the thesis.
\end{proof}

\begin{prop}\label{prop:zblow}
Let $z \in \DM$ and let $S$ be a closed, oriented $\H^{n-1}$-rectifiable set with locally finite $\H^{n-1}$-measure. Then, for $\H^{n-1}$-almost-every $x_0\in S$ and for any decreasing and infinitesimal sequence $\{r_k\}_k$, the sequence $z_k$ of vector fields defined by $z_k(y) = z(x_0+r_k y)$ converges up to subsequences to a vector field $\eta \in \DM$ in $L^\infty$-$w^*$, such that setting $\Sigma = [\nu_S(x_0)]^\perp$, we have $\div \eta = 0$ on $\R^n\setminus \Sigma$ and $[\eta\cdot \nu_{\Sigma}] = [z\cdot \nu_S](x_0)$ on $\Sigma$. 
\end{prop}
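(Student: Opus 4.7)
\emph{Proof plan.} The plan is to split the argument into three parts: extract a weak-$*$ subsequential limit of $\{z_k\}$, identify its divergence via Lemma~\ref{lem:divzero}, and then pin down the value of its normal trace on $\Sigma$.

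First, since $\|z_k\|_\infty = \|z\|_\infty$, Banach--Alaoglu produces a (non-relabeled) subsequence $z_k \rightharpoonup \eta$ in $L^\infty$-$w^*$ for some $\eta\in L^\infty(\R^n;\R^n)$. Next, I will apply Lemma~\ref{lem:divzero} with $S_k := r_k^{-1}(S-x_0)$ and $\Sigma := [\nu_S(x_0)]^\perp$. Hypothesis (i) holds by construction; (ii) is the almost-everywhere existence of the approximate tangent plane to the $\H^{n-1}$-rectifiable set $S$; (iii) will follow by combining the scaling law $|\div z_k|(A) = r_k^{1-n}|\div z|(x_0 + r_k A)$ with hypothesis~(c), which yields $|\div z_k|(B_R \setminus S_k) = r_k^{1-n}\, o(r_k^{n-1}) \to 0$ for every $R>0$. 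The lemma then gives $\div\eta = 0$ on $\R^n\setminus \Sigma$.

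For the trace identity, I will set $c := [z\cdot\nu_S](x_0)$ and show $[\eta\cdot\nu_\Sigma] = c$ $\H^{n-1}$-a.e. on $\Sigma$. At $\H^{n-1}$-a.e. $x_0\in S$, the set $S$ is locally the reduced boundary of a bounded weakly-regular set $E$ with $\nu_E = \nu_S$ on $\de^* E \cap B_{r_0}(x_0)$ for some $r_0>0$. Setting $E_k = r_k^{-1}(E-x_0)$ and $H = \{y\in\R^n : y\cdot\nu_S(x_0) < 0\}$, De Giorgi's structure theorem gives $\chi_{E_k}\to \chi_H$ in $L^1_{\mathrm{loc}}(\R^n)$. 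For $\phi\in C^\infty_c(B_R)$ I apply Theorem~\ref{thm:GG} to $z_k$ on $E_k$:
\[
\int_{\de^* E_k} \phi\,[z_k\cdot\nu_{E_k}]\,d\H^{n-1} = \int_{E_k}\phi\,d\,\div z_k + \int_{E_k} z_k\cdot\nabla\phi\,,
\]
and pass to the limit in $k$. A direct rescaling of the Gauss--Green formula shows the invariance $[z_k\cdot\nu_{S_k}](y) = [z\cdot\nu_S](x_0+r_k y)$; together with the Lebesgue point property~(b) and the weak convergence of $\H^{n-1}\llcorner S_k$ to $\H^{n-1}\llcorner\Sigma$, the left-hand side converges to $c\int_\Sigma\phi\,d\H^{n-1}$. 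On the right-hand side, the first term is controlled by $\|\phi\|_\infty|\div z_k|(B_R\setminus S_k)$ (since $E_k$ is open and locally disjoint from $S_k$ inside $B_R$), hence it vanishes by hypothesis~(c); the second term equals $\int\chi_{E_k}\,z_k\cdot\nabla\phi$ and converges to $\int_H\eta\cdot\nabla\phi$ by $L^\infty$-$w^*$ convergence of $z_k$ combined with $L^1_{\mathrm{loc}}$ convergence of $\chi_{E_k}$. Finally, Theorem~\ref{thm:GG} applied to $\eta$ on $H$, together with $\div\eta=0$ in $H$, gives $\int_\Sigma\phi\,[\eta\cdot\nu_\Sigma]\,d\H^{n-1} = \int_H\eta\cdot\nabla\phi$; equating the two limits yields $[\eta\cdot\nu_\Sigma] = c$ $\H^{n-1}$-a.e. on $\Sigma$, as required.

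The hardest part will be the selection, at $\H^{n-1}$-a.e. $x_0\in S$, of a bounded weakly-regular representative $E$ of $S$ to which Theorem~\ref{thm:GG} can be applied uniformly in $k$, together with the verification of the rescaling invariance of the weak normal trace. Both are standard consequences of the local $C^1$-graph structure of rectifiable sets outside an $\H^{n-1}$-negligible set, but some care is needed in the localization so that the cut-off contributions do not spoil the limiting identities.
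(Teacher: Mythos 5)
Your proposal is correct and follows essentially the same route as the paper: Banach--Alaoglu for the subsequential limit, the scaling of $|\div z|$ via hypothesis~(c) and the blow-up of the rectifiable set for hypotheses~(i)--(iii) of Lemma~\ref{lem:divzero}, and then a localized Gauss--Green identity on the rescaled domains combined with the Lebesgue-point property~(b) to identify the trace. The only cosmetic difference is that you localize $S$ as the reduced boundary of a weakly-regular set $E$, whereas the paper invokes the locality property of the normal trace to replace $S$ by the boundary of a $C^1$ domain; both devices play the same role.
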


\begin{proof}
We show that hypotheses~(i)--(iii) of Lemma \ref{lem:divzero} are satisfied. 
Since $\|z_k\|_\infty = \|z\|_\infty$ for all $k$, thanks to Banach-Alaoglu Theorem (see also~\cite[Theorem 3.28]{Bre11}) we can extract a not relabeled subsequence converging to $\eta\in \DM$, which gives~(i). We set $S_k = r_k^{-1}(S - x_0)$, then thanks to~(c) we have
\begin{equation}\label{eq:divzkosmall}
|\div z_k|(B_R\setminus S_k) = r_k^{1-n} |\div z|(B_{Rr_k}(x_0) \setminus S) \to 0 \qquad \text{as }k\to\infty\,,
\end{equation}
for all $R>0$, which gives~(iii). Owing to the localization property proved in~\cite[Proposition 3.2]{ACM05} we can replace $S$ with the boundary of an open set $\Omega$ of class $C^1$, such that $x_0\in \de\Omega$ and $\nu_S(x_0) = \nu_\Omega$. Defining $\Omega_k = r_k^{-1}(\Omega - x_0)$, the proof of~(ii) is reduced to showing that $\H^{n-1}\llcorner \de\Omega_k$ weakly converge as measures to $\H^{n-1}\llcorner \de H$, where $H$ is the tangent half-space to $\Omega$ at $x_0$ (so that $\Sigma = \de H$). This fact is a consequence of Theorem~\ref{thm:DeGiorgi}. Now we can apply Lemma~\ref{lem:divzero} and obtain $\div \eta = 0$ on $\R^n\setminus \Sigma$. In order to prove the last part of the statement we have to show that for any $\psi\in C^1_c(\R^n)$ one has 
\[
\int_H \psi\, d\, \div \eta + \int_H \nabla\psi \cdot \eta = \tau_0\, \int_{\de H} \psi\, d\H^{n-1}\,,
\]
where $\tau_0 = [z\cdot \nu_S(x_0)]$. Since we have proved that $\div \eta = 0$ on $\R^n\setminus \Sigma$, we only have to show that 
\begin{equation}\label{eq:intHphieta}
\int_H \nabla\psi \cdot \eta = \tau_0\, \int_{\de H} \psi\, d\H^{n-1}\,.
\end{equation}
Note that by the convergence of $z_k$ to $\eta$, the $L^1_{\text{loc}}$-convergence of $\Omega_k$ to $H$, and the convergence of the measures $\H^{n-1}\llcorner \de\Omega_k$ to $\H^{n-1}\llcorner \de H$, we have
\begin{equation}
\int_H \nabla\psi \cdot \eta - \tau_0 \int_{\de H}\psi\, d\H^{n-1} = \lim_k \int_{\Omega_k} \nabla\psi \cdot z_k - \tau_0\int_{\de \Omega_k}\psi\, d\H^{n-1}\,.\label{eq:limversoH}
\end{equation}
Therefore by~\eqref{eq:divzkosmall} and~(b) we infer that
\begin{align*}
\left|\int_{\Omega_k}\nabla \psi \cdot z_k - \tau_0\int_{\de\Omega_k} \psi\, d\H^{n-1}\right| 
&\le
\left|\int_{\de\Omega_k} \psi \, \Big([z_k\cdot \nu_{\Omega_k}] - \tau_0\Big)\, d\H^{n-1} - \int_{\Omega_k} \psi\, d\, \div z_k\right|\\ 
&\le 
 \|\psi\|_\infty\, \int_{\de\Omega_k\cap \spt \psi} |[z_k\cdot \nu_{\Omega_k}] - \tau_0| +
  \int_{\R^n\setminus \de \Omega_k} |\psi|\, |\div z_k|\\
  & \to 0,\qquad \text{as }k\to\infty\,.
\end{align*}
Combining this last fact with~\eqref{eq:limversoH} implies~\eqref{eq:intHphieta} at once, and concludes the proof.
\end{proof}

By relying on Proposition~\ref{prop:zblow} and on Theorem~\ref{thm:rigidity2}, we are now able to prove the main result of the section, i.e.~Theorem~\ref{thm:main} which states the existence of the classical trace for a divergence-measure vector field having a maximal weak normal trace on a oriented $\H^1$-rectifiable set $S$.


\begin{proof}[Proof of Theorem~\ref{thm:main}]
Without loss of generality, up to a translation we can suppose $x_0 =0$ and up to a rotation that $\nu_S(x_0) = -e_2$. Moreover, up to rescaling $\xi$ we can suppose $\|\xi \|_\infty = 1$. Let
\[
B_1^+ = B_1 \cap \R^2_+\,.
\]
We then want to show that the set 
\begin{equation}\label{def:Nalpha}
N_\alpha := \{x\in B^+_1\,:\, |\xi(x)+e_2|\ge\alpha \}
\end{equation}
has density zero at $0$ for all $\alpha >0$. Argue by contradiction and suppose there exist $\alpha, \beta >0$ and a sequence of radii $\{r_k\}_k$ decreasing to zero, such that
\begin{equation}\label{eq:contr}
\frac{|N_\alpha \cap B_{r_k}|}{\pi r_k^2} \ge \beta\qquad \text{for all }k\,.
\end{equation}
Define $z_0(x):= \xi(x) +e_2$ and the sequence $z_k(y) = z_0(r_ky)$ for $k\in \N$. Since the second component of $z_k$ is $z_{k,2}(x) = \xi_2(r_k x)+1$ one easily sees that
\begin{equation}\label{eq:iiperzk}
z_{k,2}(x) \ge \frac{|z_k(x)|^2}{2}\,,
\end{equation}
for almost every $x\in \R^2$. 
By the definition of $N_\alpha$ and by~\eqref{eq:iiperzk}, the contradiction hypothesis~\eqref{eq:contr} reads equivalently as
\begin{equation}\label{eq:contrequiv}
\left|\left \{x\in r_k^{-1}B_1^+ \,:\, z_{k,2}(x) > \frac{\alpha^2}{2} \right\} \cap B_1\right| \ge \pi \beta\,.
\end{equation}
On top of that, $z_0 \in \DM$ with $\|z_0\|_\infty \le 2$. By Proposition~\ref{prop:zblow} the sequence $z_k$ defined above converges  in $L^\infty$-$w^*$ (up to subsequences, we do not relabel) to a vector field $\eta$ such that $\div(\eta) = 0$ on $\R^2_+$ and $[\eta \cdot (-e_2)] = [z_0 \cdot (-e_2)](0)$ on $\R^2_0$. We aim to show that $\eta$ satisfies the hypotheses~(i)--(iii) of Theorem~\ref{thm:rigidity2}. Were this the case, one would conclude $\eta \equiv 0$ in $\R^2_+$ and this would yield a contradiction with~\eqref{eq:contrequiv}. Indeed, taking $\chi_{B_1^+}$ as a test function we get
\begin{equation}\label{eq:contradiction}
\pi \frac{\alpha^2 \beta}{2} \le \int_{B_1^+}z_{k,2} \xrightarrow[k]{\phantom{6chara}} \int_{B_1^+} \eta_2\,.
\end{equation}
On the one hand, we know that hypothesis~(i) of Theorem~\ref{thm:rigidity2} is satisfied as $\div(\eta) = 0$ on $\R^2_+$.  On the other hand, as $[\eta \cdot \nu] = -[z_0 \cdot e_2](0)$ on $\R^2_0$, we get that
\[
[\eta \cdot \nu] = -[\xi \cdot e_2](0) - e_2 \cdot e_2 = \|\xi\|_\infty - 1 = 0\,,
\]
so that hypothesis~(ii) of Theorem~\ref{thm:rigidity2} holds as well. We are left to show that hypothesis~(iii) of Theorem~\ref{thm:rigidity2} is satisfied. Since $z_k$ is equibounded, by~\eqref{eq:iiperzk} we infer as well that $|z_k|^2$ converges (up to subsequences, we do not relabel) to some function $\zeta$  in $L^\infty$-$w^*$. Clearly, one has from~\eqref{eq:iiperzk} and the weak-$*$ convergence of $z_k$ and of $|z_k|^2$ that $\zeta \le 2\eta_2$ almost everywhere. We want to prove that the same holds with $|\eta|^2$ in place of $\zeta$ so to retrieve hypothesis~(iii) of Theorem~\ref{thm:rigidity2} with the choice $\phi(t) = t^2/2$.  Take a probability measure $f\, dx$ with $f\in L^1(\R^2)$. Then, by Jensen's inequality
\[
\int |z_k|^2 f\, dx \ge \left|\int z_k f\, dx\right|^2 = \left( \int z_{k,1} f\, dx\right)^2 +  \left( \int z_{k,2} f\, dx\right)^2\,.
\]
As $k\to \infty$, by the weak-$*$ convergence we get
\[
\int \zeta f\, dx \ge \left( \int \eta_1 f\, dx\right)^2 +  \left( \int \eta_2 f\, dx\right)^2\,.
\]
Thus, by letting $f\, dx$ toward the Dirac measure centered at $x$,~(iii) follows at once for almost-every point $x$ (more precisely, $x$ must be a Lebesgue point for the functions $\zeta, \eta_1, \eta_2$). A direct application of Theorem~\ref{thm:rigidity2} yields the desired contradiction.
\end{proof}

\begin{cor}\label{cor:maximality}
Let $\Omega\subset \R^2$ be weakly-regular and let $\xi\in \DM(\Omega)$. Then, for $\H^{1}$-a.e. $x_0\in \de \Omega$ such that $[\xi\cdot \nu_\Omega](x_0) = \|\xi\|_\infty$ one has
\[
\aplim_{x\to x_0^-} \xi(x) = \|\xi\|_\infty\, \nu_\Omega(x_0)\,.
\]
\end{cor}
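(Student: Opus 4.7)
The plan is to reduce the corollary to Theorem~\ref{thm:main} (Theorem 5.3) by extending $\xi$ trivially outside $\Omega$ and applying the theorem to the extension with $S=\de\Omega$.

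First, I would define $\tilde\xi:\R^2\to\R^2$ by $\tilde\xi = \xi$ on $\Omega$ and $\tilde\xi = 0$ on $\R^2\setminus\Omega$. Clearly $\|\tilde\xi\|_\infty = \|\xi\|_\infty$. To show $\tilde\xi\in\DM(\R^2)$, I would use the generalized Gauss--Green formula (Theorem~\ref{thm:GG}), which since $\Omega$ is weakly-regular yields, for every $\psi\in C^1_c(\R^2)$,
\[
-\int_{\R^2}\tilde\xi\cdot\nabla\psi\,dx = -\int_\Omega\xi\cdot\nabla\psi\,dx = \int_\Omega\psi\,d\,\div\xi - \int_{\de^*\Omega}\psi\,[\xi\cdot\nu_\Omega]\,d\H^1.
\]
Hence the distributional divergence of $\tilde\xi$ is the Radon measure
\[
\div\tilde\xi = (\div\xi)\llcorner\Omega - [\xi\cdot\nu_\Omega]\,\H^1\llcorner\de^*\Omega,
\]
so $\tilde\xi\in\DM(\R^2)$ with locally finite total variation.

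Next, I would orient $S=\de\Omega$ by $\nu_S:=\nu_\Omega$. By the locality property of the weak normal trace on $C^1$ domains recalled in Section~\ref{sec:wnt}, the weak normal trace of $\tilde\xi$ on $S$ (taken from the $-\nu_S$ side, i.e.\ the $\Omega$ side) coincides $\H^1$-a.e.\ on $\de^*\Omega$ with the Gauss--Green trace $[\xi\cdot\nu_\Omega]$: indeed, on the opposite side $\tilde\xi\equiv 0$ and the decomposition of $\div\tilde\xi\llcorner S$ as a jump of one-sided traces forces the trace from the $\Omega$ side to equal exactly $[\xi\cdot\nu_\Omega]$. The preliminary assumptions (a)--(c) at the beginning of Section~\ref{sec:traces} hold for $\H^1$-a.e.\ $x_0\in\de\Omega$: (a) because weak regularity yields $\H^1(\de\Omega\setminus\de^*\Omega)=0$; (b) by the Besicovitch--Lebesgue differentiation theorem applied to $[\xi\cdot\nu_\Omega]\in L^\infty(\de\Omega;\H^1)$; and (c) because $(\div\xi)\llcorner\Omega$ is mutually singular with $\H^1\llcorner\de\Omega$ (their supports being disjoint), so that \cite[Theorem 2.56]{AFP00} gives $|\div\xi|(B_r(x_0)\cap\Omega) = o(r)$ for $\H^1$-a.e.\ $x_0\in\de\Omega$.

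Having identified $[\tilde\xi\cdot\nu_S](x_0) = [\xi\cdot\nu_\Omega](x_0) = \|\xi\|_\infty = \|\tilde\xi\|_\infty$ at the chosen point, Theorem~\ref{thm:main} applied to $\tilde\xi$ and $S=\de\Omega$ gives
\[
\aplim_{x\to x_0^-}\tilde\xi(x) = \|\xi\|_\infty\,\nu_\Omega(x_0).
\]
Since the approximate limit in Definition~\ref{def:aplim} is taken within $B_1^{-\nu_S}(x_0)$, which near $x_0$ lies in $\Omega$ (this follows from the fact that $x_0\in\de^*\Omega$ has density $\tfrac12$ with respect to $\Omega$ and tangent half-space oriented by $-\nu_\Omega(x_0)$), and since $\tilde\xi\equiv\xi$ on $\Omega$, the same approximate limit identity holds with $\tilde\xi$ replaced by $\xi$, yielding the desired conclusion.

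The only delicate point is the identification of the one-sided weak normal trace of $\tilde\xi$ on $\de\Omega$ with $[\xi\cdot\nu_\Omega]$; everything else is a transparent consequence of extending by zero and invoking Theorem~\ref{thm:main}.
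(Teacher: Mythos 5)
Your proposal is correct and follows essentially the same route as the paper: extend $\xi$ by zero outside $\Omega$, use the Gauss--Green formula for the weakly-regular set $\Omega$ to check that the extension lies in $\DM(\R^2)$, and then apply Theorem~\ref{thm:main} with $S=\de\Omega$. The additional details you supply (the explicit decomposition of $\div\tilde\xi$, the identification of the one-sided trace, and the verification of hypotheses (a)--(c)) are accurate elaborations of what the paper leaves implicit.
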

\begin{proof}
Thanks to the Gauss--Green formula~\eqref{eq:GG} in the special case $\psi=1$, one deduces that the vector field $\tilde{\xi}$ defined as $\tilde{\xi}=\xi$ on $\Omega$ and $\tilde{\xi}=0$ on $\R^2\setminus \Omega$ belongs to $\DM(\R^2)$. The conclusion is achieved by applying Theorem~\ref{thm:main} with $\tilde{\xi}$ and $\de\Omega$ in place of, respectively, $\xi$ and $S$.
\end{proof}

\subsection{An application to capillarity in weakly-regular domains} 

The trace property that we have studied in the last section is motivated by the study of the boundary behaviour of solutions to the prescribed mean curvature equation in domains with non-smooth boundary (see~\cite{LS17}). Let us consider the vector field
\[
Tu = \frac{\nabla u}{\sqrt{1+|\nabla u|^2}}
\]
associated with any given $u\in W^{1,1}_{\text{loc}}(\Omega)$. We say that $u$ is a solution to the prescribed mean curvature equation if 
\begin{equation}\label{eq:PMC}\tag{PMC}
\div Tu = H\qquad \text{on }\Omega
\end{equation}
in the distributional sense, where $H$ is a prescribed function on $\Omega$. One of the main results of~\cite{LS17} is the following theorem.
\begin{thm*}[\cite{LS17}, Theorem 4.1]
	Let $\Omega\subset \R^n$ be a weakly-regular domain and let $H$ be a given Lipschitz function on $\Omega$. Assume that the necessary condition for existence of solutions to~\eqref{eq:PMC} holds, that is,
	\[
	\left|\int_A H(x)\, dx\right| < P(A),\qquad \text{for all $A\subset \Omega$ such that $0<|A| < |\Omega|$.}
	\] 
	Then, the following properties are equivalent.
	\begin{itemize}
		\item[{\bf (E)}] \textit{(Extremality)} $\left|\int_{\Omega}H\, dx\right| = P(\Omega)$.
		\item[{\bf (U)}] \textit{(Uniqueness)} \eqref{eq:PMC} admits a solution $u$ which is unique up to vertical translations.
		\item[{\bf (M)}] \textit{(Maximality)} $\Omega$ is maximal for~\eqref{eq:PMC}, i.e. no solution can exist in any domain strictly containing $\Omega$.
		\item[{\bf (V)}] \textit{(weak Verticality)} There exists a solution $u$ which is weakly-vertical at $\partial \Omega$, i.e. 
		\begin{equation*}
		[Tu \cdot \nu] = 1\qquad \text{$\H^{n-1}$-a.e. on }\de\Omega\,,
		\end{equation*}
		where $[Tu \cdot \nu]$ is the weak normal trace of $Tu$ on $\de\Omega$. 
	\end{itemize}
\end{thm*}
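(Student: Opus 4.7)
The plan is to establish the four equivalences via a cycle $(E) \Leftrightarrow (V) \Rightarrow (U) \Rightarrow (M) \Rightarrow (E)$, with the generalized Gauss--Green formula~\eqref{eq:GG} serving as the main computational tool. The weak-regularity hypothesis is precisely what makes the weak normal trace $[Tu \cdot \nu_\Omega]$ well-defined as an $L^\infty$ function on $\de^*\Omega$, which is essential since $\de\Omega$ is not assumed to be Lipschitz.

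The central equivalence $(E) \Leftrightarrow (V)$ is obtained as follows. Under the strict necessary condition, existence of a solution $u$ is first secured by approximating $\Omega$ from the inside by smooth sub-domains $\Omega_k$ with $P(\Omega_k) \to P(\Omega)$, solving the classical problem on each $\Omega_k$ via Giusti's theorem, and passing to a subsequential limit through $BV$ compactness of the subgraphs. The field $\xi := Tu \in \DM(\Omega)$ then satisfies $\|\xi\|_\infty \le 1$ and $\div \xi = H$. Testing~\eqref{eq:GG} with $\psi \equiv 1$ yields
\[
\int_\Omega H\, dx = \int_{\de^*\Omega} [Tu\cdot\nu_\Omega]\, d\H^{n-1}.
\]
Since $|[Tu\cdot\nu_\Omega]|\le 1$ a.e.\ and $\H^{n-1}(\de^*\Omega)=P(\Omega)$ by weak-regularity, this integral identity is compatible with the extremality equality $|\int_\Omega H|=P(\Omega)$ if and only if $[Tu\cdot\nu_\Omega] = \mathrm{sgn}\!\left(\int_\Omega H\right)$ almost everywhere, which is exactly~(V).

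The remaining implications close the cycle. For $(V) \Rightarrow (U)$, I would use a standard monotonicity argument: for two solutions $u_1,u_2$ sharing the same maximal normal trace, subtracting the Gauss--Green identities (and pairing against $u_1-u_2$) gives $\int_\Omega (Tu_1 - Tu_2)\cdot\nabla(u_1-u_2)\, dx = 0$, and strict convexity of $p\mapsto \sqrt{1+|p|^2}$ forces $\nabla u_1 = \nabla u_2$ a.e. The implication $(U) \Rightarrow (M)$ is nearly tautological: if a solution existed on some $\Omega'\supsetneq \Omega$, its restriction would give a second solution on $\Omega$ that, via $(E)\Leftrightarrow(V)$ applied on $\Omega'$, cannot coincide with $u$ modulo a constant. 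Finally $(M) \Rightarrow (E)$ is shown contrapositively: if $|\int_\Omega H| < P(\Omega)$, a small weakly-regular enlargement $\Omega'\supsetneq \Omega$ (say $\Omega\cup B_\e(x_0)$ for a carefully chosen $x_0\in \de\Omega$ and $\e$ small) still satisfies the strict necessary condition, and existence of a solution there contradicts maximality.

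The hardest step is expected to be $(M)\Rightarrow(E)$: absent any boundary regularity, producing the enlargement $\Omega'$ while simultaneously preserving weak-regularity and the strict necessary condition requires delicate control of the perimeter under Boolean operations with balls, in a setting where the boundary is only $\H^{n-1}$-measurable. Conceptually, the whole argument is made possible by Theorem~\ref{thm:GG}, which upgrades the weak normal trace from a mere distribution to an honest $L^\infty(\de^*\Omega;\H^{n-1})$ function, so that pointwise a.e.\ statements like~(V) are even meaningful; this is exactly where the assumption $P(\Omega) = \H^{n-1}(\de\Omega)$ becomes indispensable.
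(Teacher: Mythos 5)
This statement is not proved in the present paper: it is quoted verbatim from the authors' earlier work \cite{LS17} (Theorem 4.1 there) as background for the application in Section 5, so there is no internal proof to compare your argument against. Judging your proposal on its own merits, the overall architecture (a cycle of implications pivoting on the Gauss--Green formula of Theorem \ref{thm:GG}, with $(V)\Leftrightarrow(E)$ obtained by testing \eqref{eq:GG} with $\psi\equiv 1$ and saturating $|[Tu\cdot\nu_\Omega]|\le 1$ against $P(\Omega)=\H^{n-1}(\de^*\Omega)$) is the right one and matches the strategy of \cite{LS17}. One small correction: $P(\Omega)=\H^{n-1}(\de^*\Omega)$ holds for \emph{every} set of finite perimeter by De Giorgi's theorem; what weak regularity buys is $\H^{n-1}(\de\Omega\setminus\de^*\Omega)=0$, which is what makes the trace an $L^\infty$ function on essentially all of the topological boundary and validates \eqref{eq:GG} for test functions not compactly supported in $\Omega$.

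Two steps have genuine gaps. First, the crux of the theorem is the \emph{existence} of a solution under the extremality hypothesis (E), without which $(E)\Rightarrow(V)$ is vacuous; your sketch (solve on smooth inner approximations $\Omega_k$ and pass to the limit by $BV$ compactness of subgraphs) does not address the real difficulty. On each $\Omega_k$ one has the strict inequality $|\int_{\Omega_k}H|<P(\Omega_k)$, so one is in the non-extremal regime and must impose some boundary datum; you do not say which, nor why the resulting sequence does not degenerate to $\pm\infty$ (the standard obstruction, which in Giusti's scheme and in \cite{LS17} is handled by perturbing $H$ to $(1-\e)H$, normalizing by vertical translations, and running a careful compactness argument), nor why verticality survives the limit. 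Second, your $(U)\Rightarrow(M)$ is circular as written: you invoke ``$(E)\Leftrightarrow(V)$ applied on $\Omega'$'', but neither extremality nor even the standing necessary condition is known on the enlarged domain $\Omega'$, and you give no reason why the restriction of a solution on $\Omega'$ cannot equal $u$ up to a constant. The clean implication is $(E)\Rightarrow(M)$: if a solution existed on $\Omega'\supsetneq\Omega$, the necessary condition for existence on $\Omega'$ applied to the proper subset $A=\Omega$ forces $|\int_\Omega H|<P(\Omega)$, contradicting (E) directly; reorganizing your cycle around this makes the maximality step one line and removes the circularity. Your assessment that $(M)\Rightarrow(E)$ via a weakly-regular enlargement is the delicate perimeter-theoretic step is accurate.
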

We remark that, in the relevant case of $H$ a positive constant, the extremality property~(E) is equivalent to $\Omega$ being a minimal Cheeger set (i.e.~$\Omega$ is the unique minimizer of the ratio $P(A) / |A|$ among all measurable $A\subset \Omega$ with positive volume, see for instance~\cite{Leo15, LNS17, LS17a, Par11, PS17}) and $H$ equals the Cheeger constant of $\Omega$. In dimension $n=2$, this \textit{extremal case} corresponds exactly to capillarity in zero gravity for a perfectly wetting fluid that partially fills a cylindrical container with cross-section $\Omega$. We also stress that the uniqueness property~(U) holds in this case without any prescribed boundary condition; this means that the capillary interface in $\Omega\times \R$ only depends upon the geometry of $\Omega$. Another important remark should be made on the verticality condition~(V), which corresponds to the tangential contact property that characterizes perfectly wetting fluids. In~\cite{LS17} this condition is obtained under the weak-regularity assumption on $\Omega$, which somehow justifies the presence of the weak normal trace in the statement (see for instance in Figure~\ref{fig:porosita} an example of non-Lipschitz, weakly-regular domain built in~\cite{LS17a} and covered by~\cite[Theorem 4.1]{LS17}).
\begin{figure}[t]
 \includegraphics[trim={1.5cm 5cm 1.5cm 4.5cm},clip, width=\textwidth]{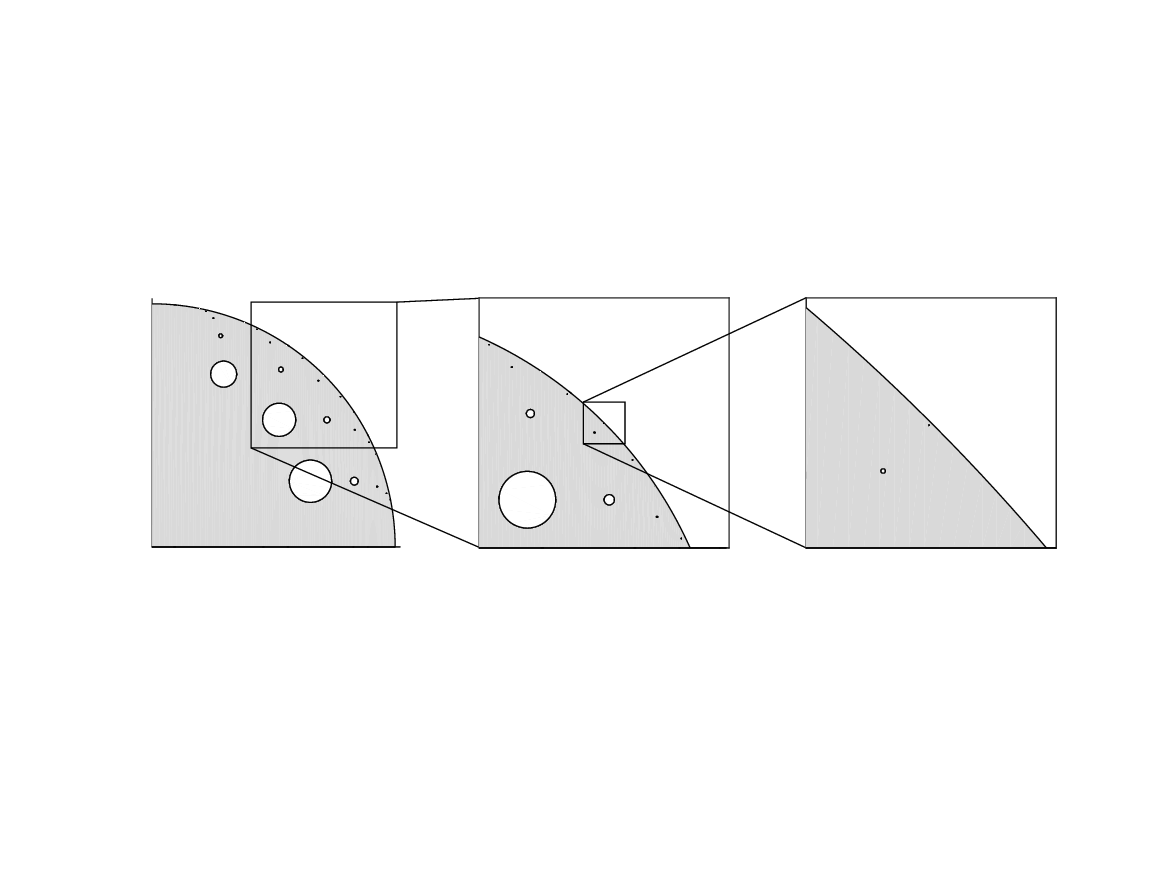}
  \caption{\label{fig:porosita} A non-Lipschitz, weakly-regular domain. Originally appeared in~\cite{LS17, LS17a}.}
  \end{figure} 
Nevertheless, in the physical three-dimensional case (i.e.~when $\Omega\subset \R^2$), the weak-verticality~(V) improves to strong-verticality, that is, the trace of $Tu$ exists and is equal to $\nu_\Omega$ almost-everywhere on $\de \Omega$ thanks to Theorem~\ref{thm:main} and to Corollary~\ref{cor:maximality}. However, we remark that the strategy of proof strongly relies on the rigidity property, which we have been able to prove only in dimension $n=2$. It is an open question whether the weak-verticality condition always improves to the strong-verticality given by the existence of the classical trace of $Tu$ at $\H^{n-1}$-almost-every point of $\de\Omega$, and more generally if Theorem~\ref{thm:main} holds in any dimension.

\bibliographystyle{plain}

\bibliography{bib_trace}

\begin{thebibliography}{10}

\bibitem{ACM05}
L.~Ambrosio, G.~Crippa, and S.~Maniglia.
\newblock Traces and fine properties of a {$BD$} class of vector fields and
  applications.
\newblock {\em Ann. Fac. Sci. Toulouse Math.}, 14(4):527--561, 2005.

\bibitem{AFP00}
L.~Ambrosio, N.~Fusco, and D.~Pallara.
\newblock {\em {Functions of Bounded Variation and Free Discontinuity
  Problems}}.
\newblock Oxford Mathematical Monographs, 2000.

\bibitem{Anz83u}
G.~Anzellotti.
\newblock Traces of bounded vector-fields and the divergence theorem.
\newblock Unpublished preprint.

\bibitem{Anz83}
G.~Anzellotti.
\newblock Pairings between measures and bounded functions and compensated
  compactness.
\newblock {\em Ann. Mat. Pura Appl.}, 135(1):293--318, 1983.

\bibitem{Bre11}
H.~Brezis.
\newblock {\em {Functional Analysis, Sobolev Spaces and Partial Differential
  Equations}}.
\newblock Springer-Verlag New York, 2011.

\bibitem{ChaeWolf2016}
D.~Chae and J.~Wolf.
\newblock On {L}iouville type theorems for the steady {N}avier-{S}tokes
  equations in $\mathbb{R}^3$.
\newblock {\em J. Differential Equations}, 261(10):5541--5560, 2016.

\bibitem{CF99}
G.Q. Chen and H.~Frid.
\newblock Divergence-measure fields and hyperbolic conservation laws.
\newblock {\em Arch. Rational Mech. Anal.}, 147(2):89--118, 1999.

\bibitem{CF03}
G.Q. Chen and H.~Frid.
\newblock Extended divergence-measure fields and the {E}uler equations for gas
  dynamics.
\newblock {\em Comm. Math. Phys.}, 236(2):251--280, 2003.

\bibitem{CT05}
G.Q. Chen and M.~Torres.
\newblock Divergence-measure fields, sets of finite perimeter, and conservation
  laws.
\newblock {\em Arch. Rational Mech. Anal.}, 175(2):245--267, 2005.

\bibitem{CTZ09}
G.Q. Chen, M.~Torres, and W.P. Ziemer.
\newblock Gauss--{G}reen theorem for weakly differentiable vector fields, sets
  of finite perimeter, and balance laws.
\newblock {\em Comm. Pure Appl. Math.}, 62(2):242--304, 2009.

\bibitem{ComiPayne}
G.E. Comi and K.R. Payne.
\newblock On locally essentially bounded divergence measure fields and sets of
  locally finite perimeter.
\newblock {\em Adv. Calc. Var.}, 2017.
\newblock Online first.

\bibitem{CT17}
G.E. Comi and M.~Torres.
\newblock One-sided approximation of sets of finite perimeter.
\newblock {\em Atti Accad. Naz. Lincei Rend. Lincei Mat. Appl.},
  28(1):181--190, 2017.

\bibitem{CdC18}
G.~Crasta and V.~De~Cicco.
\newblock An extension of the pairing theory between divergence-measure fields
  and {$BV$} functions.
\newblock {\em J. Funct. Anal.}, 2018.

\bibitem{CrastaDeCicco}
G.~Crasta and V.~De~Cicco.
\newblock Anzellotti's pairing theory and the {G}auss--{G}reen theorem.
\newblock {\em Adv. Math.}, 343:935--970, 2019.

\bibitem{CdCM19}
G.~Crasta, V.~De~Cicco, and A.~Malusa.
\newblock Pairings between bounded divergence-measure vector fields and {$BV$}
  functions.
\newblock Preprint, 2019.

\bibitem{dG54}
E.~De~Giorgi.
\newblock Su una teoria generale della misura $(r-1)$-dimensionale in uno
  spazio a $r$ dimensioni.
\newblock {\em Ann. Mat. Pura Appl.}, 36(4):191--213, 1954.

\bibitem{dG55}
E.~De~Giorgi.
\newblock Nuovi teoremi relativi alle misure $(r-1)$-dimensionali in uno spazio
  a $r$ dimensioni.
\newblock {\em Ricerche Mat.}, 4:95--113, 1955.

\bibitem{DeLellisIgnat15}
C.~De~Lellis and R.~Ignat.
\newblock A regularizing property of the {$2D$}-eikonal equation.
\newblock {\em Comm. Partial Differential Equations}, 40(8):1543--1557, 2015.

\bibitem{Fed58}
H.~Federer.
\newblock A note on the {Gauss--Green theorem}.
\newblock {\em Proc. Amer. Math. Soc.}, 9:447--451, 1958.

\bibitem{feireisl2009singular}
E.~Feireisl and A.~Novotn{\`y}.
\newblock {\em {Singular Limits in Thermodynamics of Viscous Fluids}}.
\newblock Springer Science \& Business Media, 2009.

\bibitem{Galdi2011book}
G.P. Galdi.
\newblock {\em {An Introduction to the Mathematical Theory of the Navier-Stokes
  Equations: Steady-State Problems}}.
\newblock Springer Science \& Business Media, 2011.

\bibitem{Giusti1978}
E.~Giusti.
\newblock {On the equation of surfaces of prescribed mean curvature.}
\newblock {\em Invent. Math.}, 46:111--137, 1978.

\bibitem{Ign12a}
R.~Ignat.
\newblock Singularities of divergence-free vector fields with values into
  $\mathbb{S}^1$ or $\mathbb{S}^2$. {A}pplications to micromagnetics.
\newblock {\em Confluentes Math.}, 4(3), 2012.

\bibitem{Ignat2012}
R.~Ignat.
\newblock Two-dimensional unit-length vector fields of vanishing divergence.
\newblock {\em J. Funct. Anal.}, 262(8):3465--3494, 2012.

\bibitem{KenigKoch2011}
C.E. Kenig and G.S. Koch.
\newblock An alternative approach to regularity for the {N}avier-{S}tokes
  equations in critical spaces.
\newblock {\em Ann. Inst. Henri Poincar\'e Anal. Non Lin\'eaire},
  28(2):159--187, 2011.

\bibitem{Leo15}
G.P. Leonardi.
\newblock {An overview on the Cheeger problem}.
\newblock In {\em {N}ew {T}rends in {S}hape {O}ptimization}, volume 166 of {\em
  Internat. Ser. Numer. Math.}, pages 117--139. Springer Int. Publ., 2015.

\bibitem{LNS17}
G.P. Leonardi, R.~Neumayer, and G.~Saracco.
\newblock The {C}heeger constant of a {J}ordan domain without necks.
\newblock {\em Calc. Var. Partial Differential Equations}, 56(6):164, 2017.

\bibitem{LS17}
G.P. Leonardi and G.~Saracco.
\newblock The prescribed mean curvature equation in weakly regular domains.
\newblock {\em Nonlinear Differ. Equ. Appl.}, 25(2):9, 2018.

\bibitem{LS17a}
G.P. Leonardi and G.~Saracco.
\newblock Two examples of minimal {C}heeger sets in the plane.
\newblock {\em Ann. Mat. Pura Appl. (4)}, 197(5):1511--1531, 2018.

\bibitem{Mag12}
F.~Maggi.
\newblock {\em {Sets of Finite Perimeter and Geometric Variational Problems}}.
\newblock {Cambridge Studies in Advanced Mathematics}, 2012.

\bibitem{Par11}
E.~Parini.
\newblock {An introduction to the Cheeger problem}.
\newblock {\em Surv. Math. Appl.}, 6:9--22, 2011.

\bibitem{PS17}
A.~Pratelli and G.~Saracco.
\newblock On the generalized {C}heeger problem and an application to 2d strips.
\newblock {\em Rev. Mat. Iberoam.}, 33(1):219--237, 2017.

\bibitem{Sar17}
G.~Saracco.
\newblock Weighted {C}heeger sets are domains of isoperimetry.
\newblock {\em Manuscripta Math.}, 156:371, 2018.

\bibitem{SS16}
C.~Scheven and T.~Schmidt.
\newblock {$BV$} supersolutions to equations of $1$-{L}aplace and minimal
  surface type.
\newblock {\em J. Differential Equations}, 261(3):1904--1932, 2016.

\bibitem{SS17}
C.~Scheven and T.~Schmidt.
\newblock On the dual formulation of obstacle problems for the total variation
  and the area functional.
\newblock {\em Ann. Inst. Henri Poincar\'e Anal. Non Lin\'eaire},
  35(5):1175--1207, 2018.

\bibitem{Volpert68}
A.I. Vol'pert.
\newblock The spaces {$BV$} and quasilinear equations.
\newblock {\em Math. USSR, Sb.}, 2:225--267, 1968.

\bibitem{VolpertKhudyaev}
A.I. Vol'pert and S.I. Hudjaev.
\newblock {\em Analysis in classes of discontinuous functions and equations of
  mathematical physics}, volume~8 of {\em Mechanics: {A}nalysis}.
\newblock Martinus Nijhoff Publishers, Dordrecht, 1985.

\end{thebibliography}

\end{document}